\renewcommand \a{\mathfrak{a}}
\newcommand \Ad{\operatorname{Ad}}
\newcommand \adm{\mathrm{adm}}
\newcommand \al{\alpha}
\newcommand \bs{\backslash}
\def \be{\beta}
\newcommand \C{{\mathbb C}}
\newcommand \CB{\mathcal{B}}
\newcommand \CC{\mathcal{C}}
\newcommand \CE{\mathcal{E}}
\newcommand \CO{\mathcal{O}}
\def \diam{\operatorname{diam}}
\newcommand \eps{\varepsilon}
\def \g{\mathfrak g}
\newcommand \Ga{\Gamma}
\newcommand \GL{\operatorname{GL}}
\newcommand \ga{\gamma}
\newcommand \h{\mathfrak{h}}
\newcommand \Hom{\operatorname{Hom}}
\newcommand \ind{\operatorname{ind}}
\newcommand \la{\lambda}
\newcommand \m{\mathfrak{m}}
\newcommand \mqed{\tag*\qedhere}
\newcommand \n{\mathfrak{n}}
\newcommand \N{{\mathbb N}}
\newcommand \ol{\overline}
\newcommand \om{\omega}
\newcommand \Om{\Omega}
\def \op{\mathrm{op}}
\newcommand \R{{\mathbb R}}
\newcommand \sm{\smallsetminus}
\def \stab{\operatorname{stab}}
\def \tr{\operatorname{tr}}
\def \vol{\operatorname{vol}}
\newcommand \what{\widehat}
\newcommand \Z{{\mathbb Z}}
\renewcommand \Re{\operatorname{Re}}
\renewcommand \({\left(}
\renewcommand \){\right)}
\renewcommand \[{\big(}
\renewcommand \]{\big)}
\newcommand{\e}
[1]{\emph{#1}\index{#1}}
\newcommand{\norm}
[1]{\left\|#1\right\|}
\renewcommand{\sp}
[1]{\left\langle #1\right\rangle}
\newtheorem{theorem}{Theorem}[section]
\newtheorem{lemma}[theorem]{Lemma}
\newtheorem{proposition}[theorem]{Proposition}
\theoremstyle{definition}
\newtheorem{definition}[theorem]{Definition}
\begin{document}

\pagestyle{myheadings} \markright{GEOMETRIC ZETA FUNCTIONS WITH NON-UNITARY TWISTS}

\title{Geometric zeta functions with non-unitary twists}
\author{Anton Deitmar}
\date{}
\maketitle

{\bf Abstract:} It is shown that the zeta functions of Ruelle and Selberg admit analytic continuation to meromorphic functions on the plane for every compact locally-symmetric space and every non-unitary twist.

{\bf MSC: 11M36}, 11F72, 37C30.

$$ $$

\tableofcontents

\newpage
\section*{Introduction}

In 
\cite{FP} it is shown that for a Fuchsian group the Selberg zeta function admits an analytic continuation for non-unitary twists.
If the group has cusps, an extra condition on the cusp-monodromy is needed.
In the papers 
\cites{Spilioti,Spilioti2} the continuation is given for odd dimensional compact hyperbolic manifolds.
In this note, we use a different technique, which works well for arbitrary compact locally symmetric spaces. Using some metric estimates due to Lubotzky, Mozes and Raghunathan, we are able to invoke the calculus of Lefschetz formulas for torus actions developed earlier.
The present approach will also work for groups of higher splitrank.

\section{Notation and basic facts}\label{sec1}
\begin{definition}
Let $\Ga$ be a group acting on a set $X$.
A point $x_0\in X$ is a \e{free point}, if 
 the stabilizer $\Ga_{x_0}=\stab_\Ga(x_0)$ is trivial.
\end{definition}

Recall that two metrics $d,d'$ on a space $X$ are called \e{Lipschitz equivalent}, if there is $C>0$ such that
$$
\frac1Cd\le d'\le Cd.
$$
Recall further that a metric $d$ on $X$ is called \e{proper}, if for every $x\in x$ and every $r>0$ the closed ball $\ol B_r(x)=\big\{ y\in X: d(x,y)\le r\big\}$ is compact.

\begin{definition}
\cite{Lub}.
A proper metric space $(X,d)$ is called a \e{path space}, if there exists $C>0$ such that for any $x,y\in X$ one has
$$
d(x,y)=\inf\left\{\sum_{j=0}^{n-1}d(x_{j},x_{j+1}):n\in\N, x=x_0, x_n=y, x_j\in X, d(x_j,x_{j+1})\le C\right\}.
$$
Any metric, which is Lipschitz equivalent to a path metric, is a path metric as well.
\end{definition}

Recall that a group $\Ga$ acts \e{properly discontinuously} on a space $X$, if every point $x\in X$ has a neighbourhood $U$ such that the set
$$
\big\{\ga\in\Ga: U\cap\ga U\ne\emptyset\big\}
$$
is finite.

\begin{proposition}[Proposition 3.2 of \cite{Lub}]
\label{prop1.3}
Let $\Ga$ be a finitely generated group acting properly discontinuously via isometries on a path space $(X,d_X)$ such that $\Ga\bs X$ is compact.
Let $S$ be a finite symmetric set of generators and let $d_S$ be the corresponding word metric on $\Ga$.
Let $x_0\in X$ be a $\Ga$-free point. Then the metric $d(\ga,\tau)=d(\ga x_0,\tau x_0)$ on $\Ga$ is Lipschitz equivalent to $d_S$.
\end{proposition}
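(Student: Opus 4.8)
The plan is to run the classical Milnor--\v Svarc argument, the only points needing care being that $d$ really is a metric and that the geodesics of the usual proof must be replaced by the chains supplied by the path-space hypothesis. First I would note that $d$ is a left-invariant metric on $\Ga$: left-invariance is immediate because $\Ga$ acts by isometries, $d(\al\ga,\al\tau)=d_X(\al\ga x_0,\al\tau x_0)=d_X(\ga x_0,\tau x_0)=d(\ga,\tau)$, and $d$ separates points because $x_0$ is a free point, so that $\ga\mapsto\ga x_0$ is injective. As $d_S$ is also left-invariant, it then suffices to compare $d(e,\ga)=d_X(x_0,\ga x_0)$ with the word length $|\ga|_S=d_S(e,\ga)$. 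One inequality is trivial: writing $\ga=s_1\cdots s_n$ with $n=|\ga|_S$ and $s_i\in S$, the triangle inequality and $\Ga$-invariance give $d_X(x_0,\ga x_0)\le\sum_i d_X(x_0,s_ix_0)\le M|\ga|_S$ with $M=\max_{s\in S}d_X(x_0,sx_0)<\infty$.

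For the reverse inequality I would first record the point-set fact that $\Sigma_D:=\{\ga\in\Ga:d_X(x_0,\ga x_0)\le D\}$ is finite for every $D>0$. Indeed, if it were infinite, then infinitely many distinct points $\ga_kx_0$ would lie in the ball $\ol B_D(x_0)$, which is compact since $d_X$ is proper, and would hence accumulate at some $z\in X$; choosing a ball $B_\eps(z)$ with $\{\tau\in\Ga:\tau B_\eps(z)\cap B_\eps(z)\ne\emptyset\}$ finite (this set is exactly $\{\tau:d_X(z,\tau z)<2\eps\}$) and using that, for $k,l$ large, both $\ga_kx_0$ and $\ga_lx_0$ lie in $B_\eps(z)$, one gets $\ga_k\ga_l^{-1}B_\eps(z)\cap B_\eps(z)\ne\emptyset$, so the elements $\ga_k\ga_l^{-1}$ lie in a fixed finite set, forcing $\{\ga_k\}$ to be finite --- a contradiction. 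Next, since $\Ga\bs X$ is compact and the quotient map $X\to\Ga\bs X$ is open, there is $r\ge C$ (with $C$ the path-space constant) such that $X=\bigcup_{\ga\in\Ga}B_r(\ga x_0)$.

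Now fix $\ga\in\Ga$ and put $L=d_X(x_0,\ga x_0)$. By the path-space property choose a chain $x_0=y_0,\dots,y_n=\ga x_0$ with $d_X(y_j,y_{j+1})\le C$ and $\sum_j d_X(y_j,y_{j+1})\le L+1$; by repeatedly deleting $y_{j+1},\dots,y_{k-1}$ whenever $k\ge j+2$ and $d_X(y_j,y_k)\le C$ --- which keeps all steps $\le C$ and does not increase the total length --- one may assume $d_X(y_j,y_{j+2})>C$ for all $j$. Then $d_X(y_j,y_{j+1})+d_X(y_{j+1},y_{j+2})>C$, and summing over consecutive pairs of steps gives $n\le A'L+B'$ for absolute constants $A',B'$ (one may take $A'=2/C$). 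For each $j$ pick $\ga_j\in\Ga$ with $d_X(y_j,\ga_jx_0)<r$, taking $\ga_0=e$ and $\ga_n=\ga$; then $d_X(x_0,\ga_j^{-1}\ga_{j+1}x_0)=d_X(\ga_jx_0,\ga_{j+1}x_0)<r+C+r\le 3r$, so each $\sigma_j:=\ga_j^{-1}\ga_{j+1}$ lies in the finite set $\Sigma_{3r}$ and $\ga=\sigma_0\sigma_1\cdots\sigma_{n-1}$. Hence $|\ga|_S\le n\La$ with $\La:=\max_{\sigma\in\Sigma_{3r}}|\sigma|_S<\infty$, so $|\ga|_S\le A\,d_X(x_0,\ga x_0)+B$ for suitable $A,B$. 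The additive constant is removed in the usual way: only finitely many $\ga$ satisfy $|\ga|_S\le B$, and each such $\ga\ne e$ has $d_X(x_0,\ga x_0)>0$, so after enlarging $A$ one obtains $|\ga|_S\le A\,d_X(x_0,\ga x_0)$ for all $\ga$; together with the first paragraph this gives the asserted Lipschitz equivalence.

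I expect the main obstacle to be the chain construction in the third paragraph: the path-space axiom controls the length of each step only from above, so without the pruning step the number of steps $n$, and hence the word-length bound, need not be linear in $d_X(x_0,\ga x_0)$. The deduction of the finiteness of the sets $\Sigma_D$ from proper discontinuity is the other point requiring attention, though it is routine once one uses that $d_X$ is proper and $\Ga$ acts by isometries.
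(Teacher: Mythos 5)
Your argument is correct; the paper does not prove this proposition itself but cites it from Lubotzky--Mozes--Raghunathan, and your proof is essentially their (and the standard Milnor--\v{S}varc) argument, with the pruning of the chain to force $d_X(y_j,y_{j+2})>C$ being exactly the device needed to extract a linear bound on the number of steps from the path-space axiom. The only cosmetic point is the removal of the additive constant at the end: the finite exceptional set should be $\{\ga: d_X(x_0,\ga x_0)<1\}$ (finite by your $\Sigma_D$ argument) rather than $\{\ga:|\ga|_S\le B\}$, since for $|\ga|_S>B$ the bound $|\ga|_S\le A\,d_X(x_0,\ga x_0)+B$ does not by itself become multiplicative, whereas for $d_X(x_0,\ga x_0)\ge 1$ it does.
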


\begin{proposition}
Let $G$ be a locally compact group which is second countable and compactly generated.
Fix a compact subgroup $K\subset G$.
Fix Haar measures on $G$ and $K$, such that $\vol(K)=1$.

Then the topology on $G$ is generated by a left $G$-invariant and right $K$-invariant path metric $d_G$.
The topology on the space $X=G/K$ is generated by a $G$-invariant path metric $d_X$.
\end{proposition}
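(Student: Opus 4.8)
The plan is to build $d_G$ in three stages — start from an arbitrary compatible left-invariant metric, make it right $K$-invariant by averaging over $K$, and force the path-space inequality by passing to the associated ``chain metric'' for a $K$-bi-invariant compact generating neighbourhood — and then take for $d_X$ the quotient metric on $G/K$. \emph{Step 1.} Since $G$ is second countable it is first countable, hence (being Hausdorff) metrizable by the Birkhoff--Kakutani theorem; and since a second countable locally compact group is $\sigma$-compact, it is classical that the metric can be chosen left-invariant and \emph{proper}. Fix such a metric $\rho_0$.

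\emph{Step 2.} Put $\rho(g,h)=\int_K\rho_0(gk,hk)\,dk$. Averaging preserves symmetry, the triangle inequality and left-invariance, and $\vol(K)=1$ together with invariance of Haar measure gives right $K$-invariance; if $\rho(g,h)=0$ then $\rho_0(gk,hk)=0$ for some $k$, so $g=h$ and $\rho$ is a genuine metric. It induces the topology of $G$: if $\rho(g_n,g)\to0$ then $\rho_0(g_nk,gk)\to0$ in $L^1(K)$, so some subsequence satisfies $g_nk_0\to gk_0$ for some $k_0$, hence $g_n\to g$ along it; as this applies to every subsequence, $g_n\to g$. Conversely, if $g_n\to g$ then $\rho_0(g_nk,gk)\to0$ for each $k$, and boundedly in $k$ on a tail (since $\{g_n\}\cup\{g\}$ is compact and $\rho_0$ is continuous), so $\rho(g_n,g)\to0$ by dominated convergence. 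Finally $\rho$ is proper: $\rho(e,h)\le R$ yields $k_0\in K$ with $\rho_0(k_0,hk_0)\le R$, so $hk_0\in\ol B^{\rho_0}_{R+D}(e)$ with $D=\max_{k\in K}\rho_0(e,k)$, so $h$ lies in the compact set $\ol B^{\rho_0}_{R+D}(e)\,K$; and closed $\rho$-balls, being closed in the compatible topology, are therefore compact.

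\emph{Step 3.} Choose a compact symmetric generating neighbourhood $\Omega_0$ of $e$ and set $\Omega=K\Omega_0K$, a compact symmetric $K$-bi-invariant generating neighbourhood of $e$ with $K\subset\Omega$. Define
$$
d_G(g,h)=\inf\Big\{\textstyle\sum_{j=0}^{n-1}\rho(g_j,g_{j+1}):g_0=g,\ g_n=h,\ g_j^{-1}g_{j+1}\in\Omega\Big\}.
$$
As $\Omega$ generates $G$ this set is non-empty, so $d_G$ is finite; concatenating and reversing chains gives the triangle inequality and symmetry, and $d_G\ge\rho$ (triangle inequality for $\rho$) shows $d_G$ separates points, so $d_G$ is a left-invariant metric. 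It is right $K$-invariant, since $(g_j)\mapsto(g_jk)$ is a cost-preserving bijection of admissible chains, using $k^{-1}\Omega k=\Omega$ and right $K$-invariance of $\rho$. On $\{g^{-1}h\in\Omega\}$, a neighbourhood of the diagonal, the one-step chain is optimal (because $d_G\ge\rho$), so $d_G=\rho$ there; hence $d_G$ is compatible with the topology, and since $d_G$-balls lie inside $\rho$-balls of the same radius, $d_G$ is proper. It is a path metric with constant $C:=\sup_{\omega\in\Omega}\rho(e,\omega)<\infty$: along any admissible chain $d_G(g_j,g_{j+1})=\rho(g_j,g_{j+1})\le C$, so the infimum over $C$-chains is $\le d_G(g,h)$, the reverse inequality being the triangle inequality.

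\emph{Step 4.} On $X=G/K$ set $d_X(gK,hK)=\min_{k\in K}d_G(g,hk)$, the minimum being attained since $K$ is compact; right $K$-invariance of $d_G$ makes this well defined and left-invariance makes it $G$-invariant. It is a metric (if $d_X(gK,hK)=0$ then $g\in hK$; the triangle inequality follows by composing optimal representatives and using right $K$-invariance of $d_G$). As the projection $\pi\colon G\to G/K$ is open, continuous and surjective, and $B^{d_X}_r(gK)=\pi\big(B^{d_G}_r(g)\big)$, the metric $d_X$ induces the quotient topology; properness of $d_G$ gives properness of $d_X$; and projecting admissible chains shows $d_X$ is a path metric with the same constant $C$. \emph{Main difficulty:} meeting all four requirements at once. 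Compatibility with the topology and right $K$-invariance come cheaply from Birkhoff--Kakutani and averaging, but the path-space inequality is cleanest for a bounded metric while properness forbids truncating; the resolution is to run the chain construction on the \emph{proper} (unbounded) metric $\rho$ and exploit that $d_G$ agrees with $\rho$ near the diagonal, so that the path-space constant may simply be taken to be the finite $\rho$-diameter of $\Omega$.
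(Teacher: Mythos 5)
Your construction is correct and essentially the paper's own: a compatible left-invariant \emph{proper} metric (this is Struble's theorem, the ``classical'' fact you invoke), averaged over $K$ to gain right $K$-invariance, then replaced by the chain metric attached to a compact symmetric generating set, which forces the path-space property while preserving properness because the chain metric dominates and locally agrees with the averaged metric. The only divergence is in the second half: the paper builds $d_X$ by repeating the averaging-plus-chain construction on $G/K$ starting from the double average $\int_K\int_K d(xk,yl)\,dk\,dl$, whereas you take the quotient metric $\min_{k\in K}d_G(g,hk)$ of the already-constructed $d_G$; both routes are valid and yield the required $G$-invariant proper path metric.
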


\begin{proof}The proof os a variation of (3.3) in \cite{Lub}.
By the Theorem of Struble \cite{Struble}, the topology on $G$ is generated by a left-invariant proper metric $d$.
The metric
$$
d^K(x,y)=\int_Kd(xk,yk)\,dk
$$
has the same properties and is right $K$-invariant.
Let $\Om\subset G$ be a compact symmetric generating set of $G$.
Define
$$
d_G(x,y)=\inf\left\{\sum_{j=0}^{n-1}d^K(x_{j},x_{j+1}):n\in\N, x=x_0, x_n=y, x_j\in G, x_{j+1}^{-1}x_j\in\Om\right\}
$$
has the desired properties, where the properness is verified as in \cite{Lub}.
The same construction with $d^K$ replaced by 
$$
d'(xK,yK)=\int_K\int_K d(xk,yl)\,dk\,dl
$$
yields the wanted metric $d_X$ on the space $X$.
\end{proof} 

\begin{definition}
For $g\in G$ 
let 
$$
\ell(g)=\inf_{x\in X}d_X(gx,x).
$$
\end{definition}

\begin{lemma}\label{lem1.3}
Let $G$ be as in Proposition \ref{prop1.3} and let $\Ga\subset G$ be a cocompact discrete subgroup with a free point in $X=G/K$ and let $\chi:\Ga\to\GL(V_\chi)$ be a group homomorphism, where $V_\chi$ is a finite-dimensional complex vector space.
Then there exist $A,B>0$ such that for every $\ga\in\Ga$ we have
$$
|\tr\chi(\ga)|\le A e^{B\ell(\ga)}\quad\text{and}\quad \norm{\chi(\ga)}_\op\le e^{Bd_G(\ga,e)},
$$
where $\norm._\op$ denotes the operator norm.
\end{lemma}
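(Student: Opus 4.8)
The plan is to pull everything back to the word metric on $\Ga$ by means of Proposition~\ref{prop1.3}, using submultiplicativity of the operator norm for the second estimate and conjugation-invariance of the trace for the first. Since $\Ga$ is a cocompact discrete subgroup of the compactly generated group $G$ it is finitely generated; fix a finite symmetric generating set $S$, write $|\ga|_S=d_S(\ga,e)$ for the corresponding word length, and put $M=\max\bigl(e,\max_{s\in S}\norm{\chi(s)}_\op\bigr)\ge e$. If $\ga=s_1\cdots s_k$ with $k=|\ga|_S$ and $s_i\in S$, then submultiplicativity of the operator norm gives $\norm{\chi(\ga)}_\op\le\prod_i\norm{\chi(s_i)}_\op\le M^{|\ga|_S}$, and in particular $|\tr\chi(\ga)|\le(\dim V_\chi)\,M^{|\ga|_S}$.

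For the operator-norm inequality I would apply Proposition~\ref{prop1.3} to the left action of $\Ga$ on the path space $(G,d_G)$. Left translations are fixed-point free, so every point of $G$ is a $\Ga$-free point; $\Ga$ is discrete, so it acts properly discontinuously; it acts by $d_G$-isometries since $d_G$ is left $G$-invariant; and $\Ga\bs G$ is compact. Hence the metric $\ga\mapsto d_G(\ga,e)$ on $\Ga$ is Lipschitz equivalent to $d_S$, so there is $L_2>0$ with $|\ga|_S\le L_2\,d_G(\ga,e)$ for all $\ga$. Combining, $\norm{\chi(\ga)}_\op\le M^{L_2 d_G(\ga,e)}=e^{(L_2\log M)\,d_G(\ga,e)}$, which is the asserted bound for any $B\ge L_2\log M$.

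For the trace, conjugation-invariance $\tr\chi(\ga)=\tr\chi(\tau\ga\tau^{-1})$ for $\tau\in\Ga$ reduces matters to estimating the conjugacy length $m(\ga):=\min_{\tau\in\Ga}|\tau\ga\tau^{-1}|_S$ against $\ell(\ga)$. I would apply Proposition~\ref{prop1.3} a second time, now to the $\Ga$-action on $X=G/K$ through the free point $x_0$ furnished by the hypothesis: this yields $L_1>0$ with $|\eta|_S\le L_1\,d_X(\eta x_0,x_0)$ for all $\eta\in\Ga$. Taking $\eta=\tau\ga\tau^{-1}$ and using the $G$-invariance of $d_X$ to rewrite $d_X(\tau\ga\tau^{-1}x_0,x_0)=d_X(\ga\sigma x_0,\sigma x_0)$ with $\sigma=\tau^{-1}$, we get $m(\ga)\le L_1\inf_{\sigma\in\Ga}d_X(\ga\sigma x_0,\sigma x_0)$. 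To relate this infimum to $\ell(\ga)=\inf_{x\in X}d_X(\ga x,x)$, note that since $\Ga\bs X$ is compact there is a compact $F\subset X$ with $x_0\in F$ and $\Ga F=X$, so every point of $X$ lies within $d_X$-distance $D':=\diam F$ of the orbit $\Ga x_0$. Given $\eps>0$, choose $x\in X$ with $d_X(\ga x,x)<\ell(\ga)+\eps$ and $\sigma\in\Ga$ with $d_X(x,\sigma x_0)\le D'$; then $G$-invariance and the triangle inequality give $d_X(\ga\sigma x_0,\sigma x_0)\le d_X(\sigma x_0,x)+d_X(\ga x,x)+d_X(x,\sigma x_0)\le\ell(\ga)+\eps+2D'$. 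Letting $\eps\to0$ gives $m(\ga)\le L_1(\ell(\ga)+2D')$, hence $|\tr\chi(\ga)|\le(\dim V_\chi)\,M^{2L_1 D'}\,e^{(L_1\log M)\,\ell(\ga)}$. Setting $A=(\dim V_\chi)\,M^{2L_1 D'}$ and $B=(\log M)\max(L_1,L_2)$ proves both estimates simultaneously, enlarging $B$ being harmless since $\ell(\ga),d_G(\ga,e)\ge0$.

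The only genuinely delicate point is the last comparison: the infimum defining $\ell(\ga)$ ranges over all of $X$, whereas word length is adapted to the single orbit $\Ga x_0$, so one must invoke coarse density of that orbit (cocompactness) together with the free-point hypothesis. This is precisely what forces the elements $\ga$ with $\ell(\ga)$ small to lie in finitely many $\Ga$-conjugacy classes, so that $|\tr\chi|$ remains bounded there; the remaining ingredients (finite generation of $\Ga$, and checking that the two $\Ga$-actions meet the hypotheses of Proposition~\ref{prop1.3}) are routine.
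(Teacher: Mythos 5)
Your proof is correct and takes essentially the same approach as the paper: submultiplicativity of $\norm{\,\cdot\,}_\op$ over a word decomposition combined with Proposition \ref{prop1.3} gives the second bound, and conjugation-invariance of the trace together with cocompactness bounds the conjugacy-minimized displacement by $\ell(\ga)$ plus an additive constant for the first. The only cosmetic difference is that you carry out that last comparison in $X$ via coarse density of the orbit $\Ga x_0$ of the free point, whereas the paper works in $G$ with a compact fundamental domain and the metric $d_G$; both arguments are sound.
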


\begin{proof}
As $\Ga\bs G$ is compact and $\Ga$ is discrete, the group $\Ga$ is finitely generated.
Let $S$ be a finite symmetric set of generators and let $\ell_w$ denote the corresponding word length function on $\Ga$.
For a given minimal representation $\ga=s_1\cdots s_n$ of $\ga\in\Ga$ we have
$$
\norm{\chi(\ga)}_\op\le\norm{\chi(s_1)}_\op\cdots\norm{\chi(s_n)}_\op\le M^n=M^{\ell_w},
$$
where $M=\max_{s\in S}\norm {\chi(s)}_\op$.
By Proposition \ref{prop1.3}, this gives the estimate on $\norm{\chi(\ga)}_\op$.

For the first estimate, let $\ga\in\Ga$ and let $x_0=g_0K\in X$ with $\ell(\ga)\ge d_X(\ga x_0,x_0)-1$.
Then
\begin{align*}
\ell(\ga)&\ge \int_K\int_K d_G(\ga g_0k,g_0l)\,dk\,dl-1\\
&\ge \int_K\int_Kd_G(\ga g_0k,g_0)-d_G(g_0,g_0l)\,dk\,dl-1\\
&\ge\int_K\int_Kd_G(\ga g_0,g_0)-\underbrace{d_G(\ga g_0,\ga g_0k)}_{d_G(e,l)}\,dk\,dl-\int_K \underbrace{d_G(g_0,g_0k)}_{d_G(e,k)}\,dk-1\\
&\ge d_G(\ga g_0,g_0)-C
\end{align*}
For some constant $C$.
Fix a compact fundamental domain $F\subset G$ for $\Ga$ and $\tau$ such that $g_0=\tau y_0$ for some $y_0\in F$.
Then
\begin{align*}
d_G(\ga g_0,g_0)
&=d_G(\tau^{-1}\ga \tau y_0,y_0)\\
&\ge d_G(\tau^{-1}\ga \tau y_0,e)-d_G(e,y_0)\\
&\ge d_G(\tau^{-1}\ga \tau ,e)-\underbrace{d_G(\tau^{-1}\ga \tau ,\tau^{-1}\ga \tau y_0)}_{d_G(e,y_0)}-d_G(e,y_0)\\
&\ge d_G(\tau^{-1}\ga \tau ,e)-D
\end{align*}
for some constant $D>0$, which depends on $F$ only.
Therefore we have
\begin{align*}
|\tr\chi(\ga)|&=|\tr(\chi(\tau^{-1}\ga\tau)|\\
&\le\dim(\chi)\,\norm{\tau^{-1}\ga\tau}_\op\\
&\le \dim(\chi)\,e^{Bd_G(\tau^{-1}\ga\tau,e)}\\
&\le \dim(\chi)\,e^{B\ell(\ga)+C+D}\mqed
\end{align*}
\end{proof}

\begin{definition}
Let $E=E_\chi=\Ga\bs(G\times V_\chi)$, where $\Ga$ acts diagonally.
The projection onto the first factor makes $E$ a complex vector bundle over $\Ga\bs G$.
Choose a hermitian metric on $E$ to define the space $L^2(E)$ of $L^2$-sections.
This space can be identified with the space $L^2(\Ga\bs G,\chi)$ of all measurable functions $f:G\to V_\chi$ with $f(\ga x)=\chi(\ga)f(x)$ and $\int_{\Ga\bs G}\sp{f(x),f(x)}_x\,dx<\infty$ (modulo nullfunctions).
The group $G$ acts by right translations on the Hilbert space $L^2(\Ga\bs G,\chi)$.
This representation is continuous but in general not unitary.
\end{definition}

\begin{definition}
Let $\what G_\adm$ denote the admissible dual of $G$, which can be viewed as the set of isomorphy classes of irreducible Harish-Chandra modules.
By Casselman's Subrepresentation Theorem each $\pi\in\what G_\adm$ can be globalised to a Hilbert representation of $G$.
We shall occasionally identify $\pi$ with this globalisation.
\end{definition}

\begin{proposition}[The trace formula]
There exists a complete $G$-stable filtration of $L^2(\Ga\bs G,\chi)$ with irreducible quotients, such that each $\pi\in\what G_\adm$ occurs with finite multiplicity $N_{\Ga,\chi}(\pi)\in\N_0$ in the induced graded representation and such that for each $f\in C_c^\infty(G)$ the trace formula
$$
\sum_{\pi\in\what G_\adm}N_{\Ga,\chi}(\pi)\ \tr\pi(f)=\sum_{[\ga]}\vol(\Ga_\ga\bs G_\ga)\ \CO_\ga(f)\ \tr\chi(\ga)
$$
holds, where $G_\ga$ and $\Ga_\ga$ are the centralisers of $\ga$ in $G$ and $\Ga$ respectively. 
Further, $\CO_\ga(f)$ is the \e{orbital integral}
$$
\CO_\ga(f)=\int_{G/G_\ga}f(x\ga x^{-1})\,dx.
$$
\end{proposition}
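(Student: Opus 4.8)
The plan is to run the Selberg trace formula argument, keeping careful track of the fact that the right regular representation $R$ on $H:=L^2(\Ga\bs G,\chi)$ need not be unitary, and checking that unitarity is in fact never used. First I would introduce, for $f\in C_c^\infty(G)$, the operator $R(f)=\int_G f(g)\,R(g)\,dg$, which is bounded on $H$ since on the compact support of $f$ the operators $R(g)$ form a norm-bounded family. Writing sections of $E_\chi$ as functions $\phi\colon G\to V_\chi$ with $\phi(\ga g)=\chi(\ga)\phi(g)$, a direct computation gives
$$
(R(f)\phi)(g)=\int_{\Ga\bs G}K_f(g,h)\,\phi(h)\,dh,\qquad K_f(g,h)=\sum_{\ga\in\Ga}f(g^{-1}\ga h)\,\chi(\ga)\in\operatorname{End}(V_\chi).
$$
Because $\Ga$ is discrete and $f$ is compactly supported, the sum over $\ga$ is locally finite (no convergence issue, whatever the size of $\chi(\ga)$), so $K_f$ is smooth; it has the transformation behaviour of a section of $E_\chi\boxtimes E_\chi^{*}$ over the compact manifold $\Ga\bs G\times\Ga\bs G$. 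Hence $R(f)$ is a smoothing operator on a compact manifold and therefore of trace class, and
$$
\tr R(f)=\int_{\Ga\bs G}\tr_{V_\chi}K_f(g,g)\,dg=\int_{\Ga\bs G}\sum_{\ga\in\Ga}f(g^{-1}\ga g)\,\tr\chi(\ga)\,dg,
$$
where in the last step I used that $f$ is scalar-valued, so $\tr_{V_\chi}\bigl(f(g^{-1}\ga g)\chi(\ga)\bigr)=f(g^{-1}\ga g)\,\tr\chi(\ga)$.

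Next I would compute the geometric side by sorting the $\ga$'s into $\Ga$-conjugacy classes. The key point is that the trace of a matrix is a conjugation invariant, so $\ga\mapsto\tr\chi(\ga)$ is a class function on $\Ga$ even though $\chi$ is not unitary; this is the one step where one might worry about unitarity, and it is harmless. Parametrizing the class of $\ga$ by $\Ga_\ga\bs\Ga$ and unfolding the sum against the integral over $\Ga\bs G$, then unfolding a second time over $G_\ga$ and using that $\ga$ is central in $G_\ga$ (so the integrand is $G_\ga$-invariant and $\vol(\Ga_\ga\bs G_\ga)$ factors out), one is left with the orbital integral $\CO_\ga(f)$. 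Only finitely many classes contribute, since $\CO_\ga(f)\ne0$ forces the $G$-conjugacy class of $\ga$ to meet $\supp f$, and after conjugating into a fixed compact fundamental domain such $\ga$ lie in a compact set, hence by discreteness of $\Ga$ form finitely many $\Ga$-classes. This yields
$$
\tr R(f)=\sum_{[\ga]}\vol(\Ga_\ga\bs G_\ga)\,\CO_\ga(f)\,\tr\chi(\ga).
$$

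Finally I would treat the spectral side. Here the spectral theorem is unavailable, but a filtration is enough to produce a trace identity. Since $R(f)$ is trace class, in particular compact, for every $f\in C_c^\infty(G)$, the Banach representation $(R,H)$ is completely continuous, and by the standard transfinite-induction (Zorn) argument it admits a complete increasing $G$-stable filtration whose successive quotients are topologically irreducible Hilbert representations of $G$; the one ingredient needed is that a nonzero completely continuous Hilbert representation of $G$ contains a closed topologically irreducible subrepresentation, which one extracts from a finite-dimensional generalized eigenspace of a suitable nonzero $R(f)$. By Harish-Chandra's admissibility theorem each quotient defines a class in $\what G_\adm$; let $N_{\Ga,\chi}(\pi)$ count the quotients isomorphic to $\pi$. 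Additivity of the trace along the complete filtration gives $\tr R(f)=\sum_\pi N_{\Ga,\chi}(\pi)\,\tr\pi(f)$, and comparison with the geometric side exhibits this distribution as a finite combination of orbital integrals; the linear independence of the global characters $f\mapsto\tr\pi(f)$ then forces each $N_{\Ga,\chi}(\pi)$ to be finite and independent of the chosen filtration. Combining the two sides gives the asserted trace formula.

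I expect Steps 1 and 2 to be routine bookkeeping. The main obstacle is Step 3: establishing that the non-unitary representation $H$ carries a complete filtration with admissible irreducible quotients along which the trace is additive (including the convergence needed for that additivity). I would either carry this out via the complete-continuity argument indicated above, or invoke the corresponding structural result from the earlier work on Lefschetz formulas referred to in the introduction; once that is in hand, the non-unitarity of $\chi$ causes no further difficulty.
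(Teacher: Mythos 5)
Your Steps 1 and 2 are fine and are the routine part: for $f\in C_c^\infty(G)$ the kernel $K_f(g,h)=\sum_{\ga\in\Ga}f(g^{-1}\ga h)\chi(\ga)$ is a locally finite sum, hence smooth, so $R_\chi(f)$ is a smoothing operator on a compact quotient, is trace class, and the unfolding of $\tr R_\chi(f)$ into orbital integrals weighted by $\vol(\Ga_\ga\bs G_\ga)\tr\chi(\ga)$ goes through verbatim, unitarity never being used. Note, however, that the paper does not prove this proposition at all: it is quoted as Theorem~3 of the author's earlier paper \cite{NonUnitaryTrace} on spectral theory for non-unitary twists, and essentially all of the content of that theorem sits exactly in your Step 3, which you correctly flag as the main obstacle but do not actually close.

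Concretely, three gaps remain in Step 3. First, admissibility of the topologically irreducible quotients is not given by ``Harish-Chandra's admissibility theorem'': that theorem concerns irreducible \emph{unitary} representations (or quasi-simple ones), and for merely topologically irreducible Hilbert or Banach representations admissibility can fail (Soergel's counterexample for $\SL_2(\R)$); one must extract admissibility from the complete-continuity/trace-class property of the quotients, which is part of what the cited theorem establishes. Second, additivity of the trace along the filtration, $\tr R_\chi(f)=\sum_\pi N_{\Ga,\chi}(\pi)\tr\pi(f)$, is only automatic for finite filtrations (block-triangular form); for a complete, possibly transfinite filtration the convergence and the equality require a genuine argument, and you cannot first use this identity and ``linear independence of characters'' to deduce finiteness of the multiplicities $N_{\Ga,\chi}(\pi)$, since that presupposes the convergent spectral expansion you are trying to prove. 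Third, your proposed fallback of citing ``the earlier work on Lefschetz formulas'' points at the wrong source: the structural result you need is precisely Theorem~3 of \cite{NonUnitaryTrace}, and invoking it is exactly what the paper does. So either cite that theorem (in which case Steps 1--2 of your argument are redundant, being contained in it), or supply the missing filtration/admissibility/trace-additivity arguments in full.
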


\begin{proof}
This is Theorem 3 in \cite{NonUnitaryTrace}.
\end{proof}

\section{Lie groups}
\begin{definition}
Let $G$ be a connected Lie group and $K$ a compact subgroup.
Let $\g_\R$ denote its real Lie algebra and 
let $\g=\g_\R\otimes_\R\C$ be its complex Lie algebra.
The universal envelope $U(\g)$ will be identified with the algebra of all left-invariant differential operators on $G$.
For $k\in\N$, let $U_k(\g)$ denote the finite-dimensional space of all $D\in U(\g)$ of degree $\le k$.
\end{definition}

\begin{definition}
Fix a $G$-left-invariant and $K$-right-invariant Riemannian metric on $G$ and let $d_G$ denote the induced distance function on $G$.
For $r>0$ let $V(r)=\vol(B_r(e))$ be the volume of the open ball of radius $r$ around $e$.
As the sectional curvature of $G$ is bounded below, there are $A,\be>0$ such that
$$
V(r)\le Ae^{\be r}.
$$
For this, see \cite{BC}.
\end{definition}

\begin{lemma}\label{lem2.3}
For every $\eps>0$ one has $\sum_{\ga\in\Ga}e^{-(\be+\eps)d(\ga,e)}<\infty$.
\end{lemma}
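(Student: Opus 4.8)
The plan is to compare the sum over $\Ga$ with the volume growth estimate $V(r)\le Ae^{\be r}$. The idea is that the orbit $\Ga x_0$ of a free point $x_0$ is uniformly discrete in $G$, so the number of group elements $\ga$ with $d_G(\ga,e)$ in a given dyadic annulus is controlled by the volume of a slightly enlarged annulus, hence grows at most like $e^{\be r}$; summing $e^{-(\be+\eps)r}$ against this growth converges.

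First I would fix a free point $x_0=eK\in X$ (if $e$ is not free, translate by a fixed group element; this only changes the constants), and use the fact that $\Ga$ acts properly discontinuously by isometries with $\Ga\bs G$ compact to produce a radius $\rho>0$ such that the balls $B_\rho(\ga)$, $\ga\in\Ga$, are pairwise disjoint in $G$. Indeed, properness and discreteness give a lower bound on $d_G(\ga,\tau)$ for $\ga\ne\tau$ in $\Ga$ after passing to the metric $d(\ga,\tau)=d_G(\ga x_0,\tau x_0)$ and invoking Proposition \ref{prop1.3} (Lipschitz equivalence with the word metric, which takes the value $0$ only at $e$); alternatively one argues directly from proper discontinuity. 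Left-invariance of $d_G$ makes $\rho$ uniform in $\ga$.

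Next, for $n\in\N_0$ let $\Ga_n=\{\ga\in\Ga: n\le d_G(\ga,e)<n+1\}$. Since the balls $B_\rho(\ga)$ for $\ga\in\Ga_n$ are disjoint and all contained in $B_{n+1+\rho}(e)$, and each has volume $V(\rho)>0$ independent of $\ga$ by left-invariance of the Haar measure and the metric, we get
$$
\#\Ga_n\le \frac{V(n+1+\rho)}{V(\rho)}\le \frac{A}{V(\rho)}e^{\be(n+1+\rho)}=:A'e^{\be n}.
$$
Then
$$
\sum_{\ga\in\Ga}e^{-(\be+\eps)d_G(\ga,e)}
=\sum_{n=0}^\infty\ \sum_{\ga\in\Ga_n}e^{-(\be+\eps)d_G(\ga,e)}
\le\sum_{n=0}^\infty \#\Ga_n\ e^{-(\be+\eps)n}
\le A'\sum_{n=0}^\infty e^{-\eps n}<\infty.
$$
Here I am writing $d$ for $d_G$; the lemma is stated for "$d$", which in the Lie-group section is the Riemannian distance $d_G$ fixed just above.

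The only genuine point requiring care is the existence of the uniform packing radius $\rho>0$, i.e.\ that $\inf\{d_G(\ga,e):\ga\in\Ga\sm\{e\}\}>0$. This is where proper discontinuity of the $\Ga$-action (equivalently, discreteness of $\Ga$ in the Lie group $G$ together with compactness of $K$) is used: a sequence $\ga_k\to e$ in $\Ga$ would, after multiplying by elements of $K$, accumulate in $G$, contradicting discreteness; or one quotes Proposition \ref{prop1.3} directly. Everything else is the elementary annulus-counting argument above, and the hypothesis $V(r)\le Ae^{\be r}$ does the rest.
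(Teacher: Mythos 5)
Your argument is correct and is essentially the paper's: both proofs combine the uniform discreteness of $\Ga$ in $G$ (disjoint translates of a fixed neighbourhood of $e$, with volume bounded below by left-invariance) with the volume growth bound $V(r)\le Ae^{\be r}$. The paper phrases this by dominating the sum with $c_1\int_G e^{-(\be+\eps)d_G(x,e)}\,dx$ via integration over disjoint sets $\ga U$, while you count points in unit annuli; these are the same packing argument, and your aside about free points in $X$ is an unnecessary detour since discreteness of $\Ga$ in $G$ already gives the separation directly, as you note.
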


\begin{proof}
Let $U$ be a compact unit neighbourhood such that $U\cap\ga U=\emptyset$ for every $\ga\in\Ga\sm\{e\}$.
Then there exists $c_1>0$ with
\begin{align*}
\sum_{\ga\in \Ga}e^{-(\be+\eps)d(\ga,e)}
&\le c_1\sum_{\ga\in\Ga}\int_Ue^{-(\be+\eps)d(\ga u,e)}\,du\\
&\le c_1\int_Ge^{-(\be+\eps)d(x,e)}\,dx<\infty.
\qedhere
\end{align*}
\end{proof}

\begin{definition}\label{def2.4}
Choose $B>0$  such that the conlcusions of Lemma \ref{lem1.3} are satisfied.
Fix some $C>2(\be+B)$.
For $k\in\N$  let $C^{k,C}(G)$ denote the space
of all $k$-times continuously differentiable functions $f:G\to\C$ such that
$$
p_D(f)=\sup_{x\in G}e^{Cd_G(x,e)}|Df(x)|<\infty
$$
holds for every $D\in U_k(\g)$.
Equip $C^{k,C}(G)$ with the topology given by the seminorms $p_D$, $D\in U_k(\g)$.
Note that, as $C>\be$, for $f\in C^{k,C}(G)$ we have $Df\in L^1(G)$ for every $D\in U_k(\g)$.
\end{definition}

\begin{proposition}\label{prop2.4}
Let $N\in\N$ with $2N>\dim G$.
Then the trace formula is valid for every  $f\in C^{2N,C}(G)$.
Let $\tr_\Ga(f)$ denote either side of the trace formula. Then $\tr_\Ga$ is a continuous linear map $C^{2N,C}(G)\to\C$.
\end{proposition}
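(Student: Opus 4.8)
The plan is to extend the trace formula from $C_c^\infty(G)$ to the larger space $C^{2N,C}(G)$ by a density-and-continuity argument, checking separately that both the spectral side and the geometric side converge and depend continuously on $f$ in the seminorm topology of Definition \ref{def2.4}. The key point is that $C_c^\infty(G)$ is dense in $C^{2N,C}(G)$ (multiply by a sequence of cutoff functions that are $1$ on larger and larger balls, using that the exponential weight $e^{Cd_G(x,e)}$ controls tails), so it suffices to prove that each side of the formula is a continuous linear functional on $C^{2N,C}(G)$; then the identity passes to the limit.

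For the \emph{geometric side}, I would group the sum $\sum_{[\ga]}\vol(\Ga_\ga\bs G_\ga)\,\CO_\ga(f)\,\tr\chi(\ga)$ and bound it termwise. The orbital integral $\CO_\ga(f)=\int_{G/G_\ga}f(x\ga x^{-1})\,dx$ is estimated by writing $|f(y)|\le p_{\Id}(f)\,e^{-Cd_G(y,e)}$ and using that $d_G(x\ga x^{-1},e)$ is bounded below essentially by $d_G(\ga,e)$ up to the displacement length $\ell(\ga)$; together with the volume growth bound $V(r)\le Ae^{\be r}$ this gives $|\CO_\ga(f)|\lesssim p_{\Id}(f)\,e^{-(C-\be)\ell(\ga)}$ times a factor accounting for $\vol(\Ga_\ga\bs G_\ga)$. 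Combining with Lemma \ref{lem1.3}, $|\tr\chi(\ga)|\le Ae^{B\ell(\ga)}$, and with Lemma \ref{lem2.3} (summability of $e^{-(\be+\eps)d(\ga,e)}$ over $\Ga$), the choice $C>2(\be+B)$ is exactly what makes $\sum_{[\ga]} e^{-(C-\be)\ell(\ga)}e^{B\ell(\ga)}$ converge with room to spare, and the bound is linear in the seminorms of $f$. One must take a little care to pass from a sum over conjugacy classes $[\ga]$ to a sum over $\ga\in\Ga$ and to handle the centraliser volumes, but this is where the standard reduction (as in the proof of Lemma \ref{lem2.3}, integrating over a unit neighbourhood) applies.

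For the \emph{spectral side}, the issue is that $\pi(f)$ need not be trace class a priori for merely rapidly decreasing $f$, which is why we need $2N>\dim G$ derivatives: one writes $\pi(f)=\pi(\Delta^{-N})\,\pi((\,1+\Delta\,)^N f)$ where $\Delta$ is a Laplace-type element of $U_{2N}(\g)$, so that $\pi((1+\Delta)^N f)$ is bounded (with norm controlled by finitely many $p_D(f)$, $D\in U_{2N}(\g)$, since $(1+\Delta)^Nf\in L^1(G)$ by the remark in Definition \ref{def2.4}) while $\pi((1+\Delta)^{-N})$ is trace class with trace norm summable against the multiplicities $N_{\Ga,\chi}(\pi)$ by the standard Weyl-law/heat-kernel estimate on $\Ga\bs G$. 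This yields $\sum_\pi N_{\Ga,\chi}(\pi)\,|\tr\pi(f)|\le \text{const}\cdot\max_{D\in U_{2N}(\g)} p_D(f)$, i.e. continuity of the spectral side.

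The main obstacle I expect is the spectral-side trace-class estimate: one must control $\sum_\pi N_{\Ga,\chi}(\pi)$ weighted by the eigenvalue of the Casimir-type operator to the power $-N$, uniformly, and in the \emph{non-unitary} setting the representation on $L^2(\Ga\bs G,\chi)$ is not unitary, so one cannot directly invoke the spectral theorem; instead one works with the filtration from the trace-formula proposition and the fact that the multiplicities $N_{\Ga,\chi}(\pi)$ coincide with those for a unitary twist of the same dimension (or are dominated by them), reducing the convergence to the classical case. Once both sides are seen to be continuous functionals agreeing on the dense subspace $C_c^\infty(G)$, the proposition follows immediately.
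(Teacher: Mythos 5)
Your overall architecture (cut off by $f_n=h_nf$, exploit $2N>\dim G$ derivatives through the factorization $R(f)=R(\Delta+1)^{-N}R((\Delta+1)^Nf)$) is the same device the paper uses, but three of your key steps have genuine gaps. First, $C_c^\infty(G)$ is \emph{not} dense in $C^{2N,C}(G)$: for $f\in C^{2N,C}(G)$ the weighted function $e^{Cd_G(x,e)}|Df(x)|$ is merely bounded, not vanishing at infinity, so $p_D(f_n-f)$, which is a supremum over $\{d_G(x,e)\ge n\}$, need not tend to $0$. The paper never claims density; it only uses that $f_n\to f$ locally uniformly and proves directly that $\norm{R(Df_n)-R(Df)}_\op\to 0$ via the kernel estimate $|f(x^{-1}\ga y)|\,\norm{\chi(\ga)}_\op\le Te^{-2\be d_G(\ga,e)}$ --- which is also where the choice $C>2(\be+B)$ actually enters, not in the geometric side as you guessed. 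Your density claim could be repaired by working with a slightly smaller exponent $C'$, but as written the limiting argument does not close.

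Second, on the spectral side you need $\sum_\pi N_{\Ga,\chi}(\pi)\,\norm{\pi((1+\Delta)^{-N})}_{tr}<\infty$, which amounts to a Weyl law in the non-unitary setting; your proposed reduction (``the multiplicities are dominated by those of a unitary twist'') is not established anywhere and is not obviously true. The paper instead works with the single operator $R_\chi(\Delta+1)^{-N}$ on $L^2(\Ga\bs G,\chi)$, shows it is trace class by trivializing $E_\chi$ locally and using Fourier series, and first establishes that $R_\chi(f)$ is an integral operator with continuous kernel $\sum_\ga f(x^{-1}\ga y)\chi(\ga)$ (hence Hilbert--Schmidt) --- an input your sketch omits entirely. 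Third, your termwise bound $|\CO_\ga(f)|\lesssim p_{\Id}(f)\,e^{-(C-\be)\ell(\ga)}$ is asserted, not proved; the lower bound on $d_G(x\ga x^{-1},e)$ in terms of $\ell(\ga)$ and the displacement of $x$ is exactly the hard part of convergence of the geometric side. The paper sidesteps it: it first treats $f\ge0$, $\chi=1$ by monotone convergence, inheriting convergence of the geometric side from the classical case, and then handles general $\chi$ by dominated convergence using the bound of Lemma \ref{lem1.3}. So while your outline identifies the right difficulties, each of the three decisive estimates is either missing or replaced by an unsupported claim.
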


{\it Proof.} We need a lemma.

\begin{lemma}
Let $R_\chi$ denote the right translation representation of $G$ on $L^2(\Ga\bs G,\chi)$.
Equip the algebra of bounded operators $\CB\(L^2(\Ga\bs G,\chi)\)$ with the strong operator topology. Then for $f\in C^{0,C}(G)$ and $\phi\in L^2(\Ga\bs G,\chi)$, the  integral
$$
R_\chi(f)\phi(x)=\int_Gf(y)\phi(xy)\,dy
$$
exists almost everywhere for $x\in G$ and defines a Hilbert-Schmidt operator $R_\chi(f)\in\CB\(L^2(\Ga\bs G,\chi)\)$.
\end{lemma}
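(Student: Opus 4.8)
The plan is to realise $R_\chi(f)$ as an integral operator over the compact quotient $\Ga\bs G$ with an explicit $\operatorname{End}(V_\chi)$-valued kernel, and then to prove it is Hilbert--Schmidt by showing that kernel is square-integrable. Substituting $z=xy$ turns the defining integral $\int_Gf(y)\phi(xy)\,dy$ into $\int_Gf(x^{-1}z)\phi(z)\,dz$; unfolding the latter into an integral over $\Ga\bs G$ (legitimate since $G$ is unimodular) and using $\phi(\ga z)=\chi(\ga)\phi(z)$ yields
$$
R_\chi(f)\phi(x)=\int_{\Ga\bs G}k_f(x,z)\,\phi(z)\,dz,\qquad k_f(x,z)=\sum_{\ga\in\Ga}f(x^{-1}\ga z)\,\chi(\ga)\in\operatorname{End}(V_\chi),
$$
the integrand $k_f(x,z)\phi(z)$ being $\Ga$-invariant in $z$. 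Thus two things remain: the series defining $k_f$ must converge, and $k_f$ must be square-integrable over $(\Ga\bs G)\times(\Ga\bs G)$ against the hermitian metric on $E$ --- the standard criterion for an integral operator to be Hilbert--Schmidt.

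To organise this I would fix a relatively compact measurable fundamental domain $F\subset G$ for $\Ga$ (it exists because $\Ga$ is discrete and $\Ga\bs G$ compact) and a fixed inner product $h_0$ on $V_\chi$. Since $\Ga\bs G$ is compact, replacing the hermitian metric on $E$ by the one obtained from $h_0$ by $\Ga$-equivariance from $F$ changes the norm on $L^2(\Ga\bs G,\chi)$ only up to equivalence, hence is harmless for the Hilbert--Schmidt question; with that metric, restriction to $F$ identifies $L^2(\Ga\bs G,\chi)$ isometrically with $L^2(F,V_\chi)$ and $R_\chi(f)$ with the integral operator of kernel $k_f|_{F\times F}$. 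So it suffices to prove that $k_f$ is bounded on $F\times F$: being relatively compact, $F$ has finite volume, so a bounded kernel lies in $L^2(F\times F)$ and the operator is Hilbert--Schmidt.

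The boundedness is the heart of the matter, and it uses the decay of $f$ against the growth of $\chi$. For $D=\Id$ the defining seminorm of $C^{0,C}(G)$ gives $|f(g)|\le p_{\Id}(f)\,e^{-Cd_G(g,e)}$, and, with $B$ as in Definition \ref{def2.4}, Lemma \ref{lem1.3} gives $\norm{\chi(\ga)}_\op\le e^{Bd_G(\ga,e)}$. By left-invariance of $d_G$ and the triangle inequality, for $x,z\in F$,
$$
d_G(x^{-1}\ga z,e)=d_G(\ga z,x)\ge d_G(\ga,e)-d_G(z,e)-d_G(x,e)\ge d_G(\ga,e)-2R_F,\qquad R_F:=\sup_{x\in F}d_G(x,e)<\infty,
$$
whence
$$
\norm{k_f(x,z)}_\op\le\sum_{\ga\in\Ga}|f(x^{-1}\ga z)|\,\norm{\chi(\ga)}_\op\le p_{\Id}(f)\,e^{2CR_F}\sum_{\ga\in\Ga}e^{-(C-B)d_G(\ga,e)}.
$$
Since $C>2(\be+B)$ we have $C-B>\be$, so the last series converges by Lemma \ref{lem2.3}; thus $k_f$ converges and is bounded on $F\times F$, and $R_\chi(f)$ is Hilbert--Schmidt. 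The same computation (with $R_F$ enlarged by $d_G(x,e)$) bounds $\int_G|f(y)|\,|\phi(xy)|\,dy$ for each fixed $x$, giving existence of the defining integral for every $x$, a fortiori almost everywhere; and it gives $\norm{R_\chi(f)}_\op\le\norm{R_\chi(f)}_{\mathrm{HS}}\le\mathrm{const}\cdot p_{\Id}(f)$, so $f\mapsto R_\chi(f)$ is continuous from $C^{0,C}(G)$ into $\CB\(L^2(\Ga\bs G,\chi)\)$ for the operator-norm topology, a fortiori for the strong operator topology.

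The one real obstacle is the non-unitarity of $\chi$, which has two effects: the fibre metric of $E$ genuinely varies over the base, and $\norm{\chi(\ga)}_\op$ grows with $d_G(\ga,e)$. The first is neutralised by using compactness of $\Ga\bs G$ to pass to a constant model metric on a bounded fundamental domain; the second by the fact that the growth rate $B$ of $\chi$ is strictly smaller than the decay rate $C$ of $f$ --- which is exactly what the standing choice $C>2(\be+B)$ from Definition \ref{def2.4} secures.
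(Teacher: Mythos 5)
Your argument is correct and follows essentially the same route as the paper: unfold $R_\chi(f)$ into the kernel $k(x,y)=\sum_{\ga\in\Ga}f(x^{-1}\ga y)\chi(\ga)$, use the triangle inequality on a compact fundamental domain together with the bounds $|f(g)|\le p_{\Id}(f)e^{-Cd_G(g,e)}$ and $\norm{\chi(\ga)}_\op\le e^{Bd_G(\ga,e)}$ from Lemma \ref{lem1.3}, and invoke Lemma \ref{lem2.3} via $C>2(\be+B)$ to get uniform absolute convergence, so that the kernel is square-integrable over the compact quotient and the operator is Hilbert--Schmidt. Your only additions --- the explicit normalization of the hermitian metric on $E_\chi$ and the remark on continuity of $f\mapsto R_\chi(f)$ --- are harmless refinements of points the paper leaves implicit.
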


\begin{proof}
Let $F\subset G$ denote a compact fundamental domain for $\Ga$, i.e., $F$ is compact and contains a measurable set $V$ of representatives for $\Ga\bs G$ such that $F\sm V$ has measure zero.
Assume further that $e\in F$.
For $x,y\in F$ and $\ga\in\Ga$ one has
$
d_G(\ga,e)\le d_G(\ga,\ga x)+d_G(\ga x,y)+d_G(y,e)$ and
hence
\begin{align*}
d_G(x^{-1}\ga y,e)&=d_G(\ga y,x)\\
&\ge d_G(\ga,e)-d_G(e,x)-d_G(y,e)\\
&\ge d_G(\ga,e)-2\diam(F).
\end{align*}
It follows
\begin{align*}
|f(x^{-1}\ga y)|\norm{\chi(\ga)}_\op
&\le |f(x^{-1}\ga y)|\ e^{Bd_G(\ga,e)}\\
&\le T_1e^{-Cd_G(x^{-1}\ga y,e)}\ e^{Bd_G(\ga,e)}\\
&\le Te^{-2\be d(\ga,e)}
\end{align*}
for some $T_1,T>0$.
By Lemma \ref{lem2.3}, the sum $k(x,y)=\sum_{\ga\in\Ga}f(x^{-1}\ga y)\chi(\ga)$
converges uniformly absolutely, therefore represents a continuous integral kernel $k$ on the compact manifold $\Ga\bs G$ for the operator $R_\chi(f)$. Having a continuous, hence square-integrable kernel, this operator is Hilbert-Schmidt.
\end{proof}

\begin{proof}[Proof of Proposition \ref{prop2.4}]
Let $\Delta$ denote the left-invariant group Laplacian given by the Riemannian metric on $G$.
 Since $\Delta\in U(\g)$, the operator $R_\chi(\Delta)$ is well defined as a densely defined unbounded operator on $L^2(\Ga\bs G,\chi)$. 
 Then, for $2N>\dim(G)$ and $c\in\C$, which is not in the spectrum on $\Delta$, the operator 
$R(\Delta+c)^{-N}$ is of trace class. 
This is folklore and can be deduced from \cite{Sh}.
One way to see it directly is to choose a smooth atlas $(U_i,\phi_i)_{i\in I}$ such that the bundle $E_\chi$ trivialises over each $U_i$, where the charts $\phi_i$ are maps taking values in $\R^n/\Z^n$. Then one uses a smooth partition of unity underlying $(U_i)_{i\in I}$ to translate the problem to $\R^n/\Z^n$ where it can be solved  using Fourier-series.

Let $\om :[0,\infty )\to[0,1]$ be a monotonic $C^{2N}$-function
with compact support, $\om\equiv 1$ on $[0,1]$ and
$|\om^{(k)}(t)|\le 1$ for $k=1,\dots ,2N$. Let
$h_n(x):=\om\(\frac{d_G(x,e)}{n}\)$ for $x\in G$, $n\in \N$. Then
$|Dh_n(x)|\le \frac{C_D}{n}$ for any $D\in\g U(\g)$. Let
$f_n=h_nf$ then $f_n\to f$ locally uniformly.
As $f_n$ has compact support, we  get that $R_\chi(f_n)$ is a trace class operator.

Denoting the trace norm by $\norm{.}_{tr}$ we infer
\begin{align*}
\norm{R(f_n)-R(f)}_{tr}
&= \norm{R(\Delta +1)^{-N}(R((\Delta +1)^Nf_n)-R((\Delta +1)^Nf))}_{tr}\\
&
\le \norm{R(\Delta +1)^{-N}}_{tr}\norm{R((\Delta +1)^Nf_n)-R((\Delta
+1)^Nf)}_\op.
\end{align*}
{\it Claim.} For the operator norm  we have
$$
\parallel R(Df_n) -R(Df) \parallel_\op \to 0
$$
as $n\to \infty$ for any $D\in U(\g)$ of degree $\le 2N$.

{\it Proof of the claim.} By the Poincar\'e-Birkhoff-Witt theorem
$D(f_n)=D(h_nf)$ is a sum of expressions of the type
$D_1(h_n)D_2(f)$ and $D_1$ can be chosen to be the identity
operator or in $\g U(\g)$. The first case gives the summand
$h_nD(f)$ and it is clear that $\norm{R(Df)-R(h_nDf)}_\op$ tends to zero
as $n$ tends to infinity. For the rest assume $D_1\in \g U(\g)$.
Then $|D_1(h_n)(x)|\le \frac{c}{n}$ hence
$\norm{R(D_1(h_n)D_2(f))}_\op$ tends to zero because $D_2(f)$ is in
$C^{0,C}(G)$. The claim follows.

We conclude that $\tr R(f_n)$ tends to $\tr R(f)$
as $n\to \infty$, which implies that the spectral side of the trace formula defines a continuous functional on $C^{2N,C}(G)$. 
It remains to show that it coincides with the geometric side.
For this we go back to the situation where $f_n=h_nf$.
It follows
\begin{align*}
\sum_{\pi\in\what{G}_\adm} N_{\Ga ,\omega}(\pi)\tr\pi (f) &= \tr R(f)= \lim_n \tr R(f_n).\\
\end{align*}

Now suppose as an additional condition that $f\ge 0$ and $\chi =1$.
Then we are allowed to interchange the limit and the sum by monotone convergence and thus in this case
$$
\sum_{\pi\in\what{G}_\adm} N_{\Ga ,1}(\pi)\tr\pi (f)=
\sum_{[\ga]} \vol(\Ga_\ga\bs G_\ga)\ \CO_\ga(f).
$$
In particular, the right hand side is finite.
For general $\chi$, we use the bound on $\tr\chi(\ga)$ to justify the same interchange by dominated convergence using the result for $\chi =1$.
As both sides of the formula are linear in $f$, the claim follows generally.
\end{proof}

\section{The Lefschetz formula}\label{Sec3}
In this section, we adapt the contents of Section 1 in \cite{GAFA} to the situation of non-unitary twists.
For the most part, it consists of definitions.
At the end, we only give the changes necessary to adapt the proof of the main theorem of Section 1 in [loc.cit.] to our situation.

Let $G$ be a semi-simple, connected Lie group with finite center.
Let $H=AT$ be a maximally split Cartan subgroup, where $A$ is the  split component and $T$ is compact abelian.
Let $\a$ be real Lie algebra of $A$ and $\a^*=\Hom(\a,\R)$ be its dual space.
Fix a parabolic subgroup $P$ of $G$ with Langlands decomposition $P=MAN$.
The choice of $P$ induces a choice of positive roots $\Phi^+(\a,\g)\subset \a^*$.
Let $\a^-=\big\{X\in \a: \al(a)<0\ \forall_{\al>0}\big\}$ and $A^-=\exp(\a^-)$ be the negative cones in $\a$ and $A$.

\begin{definition}
For $\mu\in\a^*$ and $j\in\N$, let $C^{\mu,j}(A^-)$
denote the space of $j$-times continuously differentiable functions $\phi$ on $A$ which vanish outside $A^-$ and satisfy
$$
N_D(\phi)=\sup_{a\in A}\,|D\phi(a)|a^{-\mu}<\infty
$$
for every $D\in U_j(A)$.
Equip $C^{\mu,j}(A^-)$ with the topology induced by the seminorms $N_D$, $D\in U_j(A)$.
Let $D_1,\dots,D_r$ be a basis of $U_j(A)$, then the norm
$$
\norm\phi=N_{D_1}(\phi)+\dots+N_{D_r}(\phi)
$$
induces the topology, hence $C^{\mu,j}(A^-)$ is a Banach space.
\end{definition}

\begin{definition}
Choose a maximal compact subgroup $K$ with
Cartan involution $\theta$, i.e., $K$ is the group of
fixed points of $\theta$. 
After conjugation we can assume that $A$ and $M$ are stable under
$\theta$. Then $A$ is a maximal split torus of $G$ and $M$ is a subgroup of $K$. The centralizer of $A$
is $AM$. Let $W(A,G)$ be the \emph{Weyl group} of $A$, i.e. $W$ is the quotient of the
normalizer of $A$ by the centralizer. This is a finite group acting on $A$. 
\end{definition}

\begin{definition}\label{def3.3}
We have to fix Haar measures. We use the normalization of
Harish-Chandra \cite{HC-HA1}. Note that this normalization
depends on the choice of an invariant bilinear form $B$ on
$\g_\R$ which we keep at our disposal until later. Changing $B$ amounts to scaling the
metric of the symmetric space. Note further that in this normalization of Haar measures the
compact groups $K$ and $M$ have volume $1$.
\end{definition}

\begin{definition}
We write $\g_\R, \mathfrak{k}_\R,\a_\R,\m_\R,\n_\R$ for the real Lie
algebras of the groups $G,K,A,M,N$ and $\g,\mathfrak{k},\a,\m,\n$ for their
complexifications. $U(\g)$ is the universal enveloping
algebra of $\g$. This algebra is isomorphic to the algebra
of all left invariant differential operators on $G$ with
complex coefficients. Pick a Cartan subalgebra $\mathfrak{t}$ of $\m$. Then $\h=\a\oplus\mathfrak{t}$ is a
Cartan subalgebra of $\g$. Let $W(\h,\g)$ be the corresponding absolute Weyl group.

Let $\a^*$ denote the dual space of the complex vector
space $\a$. Let $\a_\R^*$ be the real dual of $\a_\R$. We identify $\a_\R^*$ with the real vector space of all $\la\in\a^*$ that map $\a_\R$ to $\R$. Let $\Phi\subset\a^*$ be the set of all roots
of the pair $(\a,\g)$ and let $\Phi^+$ be the subset of
positive roots with respect to $P$. Let $\Delta\subset
\Phi^+$ be the set of simple roots. Then $\Delta$ is a basis of $\a^*$. The open {\it
negative Weyl chamber} $\a_\R^-\subset\a_\R$ is the cone of all
$X\in\a_\R$ with $\alpha(X)<0$ for every $\alpha\in\Delta$. Let $\overline{\a_\R^-}$ be the
closure of $\a_\R^-$. The $W(A,G)$-translates $w\a_\R^-$ of $\a_\R^-$ are pairwise disjoint
and their union equals $\a_\R$ minus a finite number of hyperplanes. This is called the
\emph{regular set},
$$
\a_\R^{reg}= \bigcup_{w\in W(A,G)} w\a_\R^-.
$$
Let $A^{reg}=\exp\left(\a_\R^{reg}\right)$ be the regular set in $A$. The elements are
called the regular elements of $A$. A given $a\in A$ lies in $A^{reg}$ if and only if the
centralizer of $a$ in $G$ equals the centralizer of $A$ in $G$ which is $AM$.
\end{definition}

The bilinear form $B$ is indefinite on $\g_\R$, but the form
$$
\sp{X,Y}= -B(X,\theta(Y))
$$
is positive definite, ie an inner product on $\g_\R$. We extend it to an inner product on
the complexification $\g$. Let $\norm X=\sqrt{\sp{X,X}}$ be the corresponding norm.
The form $B$, being nondegenerate, identifies $\g$ to its dual space $\g^*$. In this way we
also define an inner product $\sp{.,.}$ and the corresponding norm on $\g^*$.
Furthermore, if $V\subset\g$ is any subspace on which $B$ is nondegenerate, then $B$ gives
an identification of $V^*$ with $V$ and so one gets an inner product and a norm on
$V^*$. This in particular applies to $V=\h$, a Cartan subalgebra of $\g$, which is defines
over
$\R$.

\begin{definition}
Let $\Ga\subset G$ be a discrete, cocompact, torsion-free subgroup. 
We are interested in the closed geodesics on the locally symmetric space $X_\Ga=\Ga\bs
X=\Ga\bs G/K$. Every such geodesic 
$c$ lifts to a $\Ga$-orbit of geodesics on $X$ and gives a $\Ga$-conjugacy class $[\ga_c]$ of
elements closing the particular geodesics. This induces a bijection between the set of all
homotopy classes of closed geodesics in $X_\Ga$ and the set of all non-trivial conjugacy
classes in $\Ga$.
\end{definition}

\begin{definition}
An element $am$ of $AM$ is called {\it split regular} if
$a$ is regular in $A$. An element $\ga$ of $\Ga$
is called split regular if $\ga$ is in $G$ conjugate to a
split regular element $a_\ga m_\ga$ of $AM$. In that case
we may (and do) assume that $a_\ga$ lies in the negative
Weyl chamber $A^-=\exp(\a_\R^-)$ in $A$. Let $\CE(\Ga)$
denote the set of all conjugacy classes in $\Ga$ that
consist of split regular elements. Via the above correspondence the set $\CE(\Ga)$ can be
identified with the set of all homotopy classes of regular closed geodesics in $X_\Ga$, \cite{jost}.
\end{definition}

\begin{definition}\label{def3.7}
Let $[\ga]\in\CE(\Ga)$.
There is a closed geodesic $c$ in the Riemannian manifold $\Ga\bs G/K$ which gets closed by
$\ga$. This means that there is a lift $\tilde c$ to the universal covering $G/K$ which is
preserved by $\ga$ and $\ga$ acts on $\tilde c$ by a translation.
The closed geodesic $c$ is not unique in general but lies in a unique maximal flat
submanifold $F_\ga$ of $\Ga\bs G/K$. Let $\la_\ga$ be the volume of that flat,
$$
\la_\ga= \vol(F_\ga).
$$ 
Let $\Ga_\ga$ and $G_\ga$ denote the centralizers of $\ga$ in $\Ga$ and $G$ respectively.
The flat $F_\ga$ is the image of $\Ga_\ga\bs G_\ga /K_\ga$ in $\Ga\bs G/K$, where we assume
that $K$ is chosen so that $K_\ga=G_\ga\cap K$ is a maximal compact subgroup of $G_\ga$. 
In Harish-Chandra's normalization $G_\ga$ is equipped with the Haar measure that satisfies
$\int_{G_\ga}=\int_{G_\ga/K_\ga}\int_{K_\ga}$, where $K_\ga$ has the Haar measure with
$\vol(K_\ga)=1$ and $G_\ga/K_\ga$ gets the measure induced by the metric of $G/K$. Therefore
$$
\la_\ga= \vol(\Ga_\ga\bs G_\ga /K_\ga)=\vol(\Ga_\ga\bs G_\ga).
$$
Note that for not maximally split elements $\ga$ the relation between $\la_\ga$ and
$\vol(\Ga_\ga\bs G_\ga)$ is more complicated \cites{geom,HRLef}.
\end{definition}

\begin{definition}
Let $\n$ denote the complexified Lie algebra of $N$. For
any $\n$-module $V$ let $H_q(\n,V)$ and $H^q(\n,V)$ for $q=0,\dots,\dim\n$
be the Lie algebra homology and cohomology \cite{BorWall}.    
 If
$\pi\in\what G_\adm$, then $H_q(\n,\pi)$ and $H^q(\n,\pi)$ are finite
dimensional $AM$-modules \cite{HeSch}. Note that they a priori only are $(\a\oplus\m,M)$-modules, but since $A$ is isomorphic to its Lie algebra they are  $AM$-modules.
\end{definition}

Note that $AM$ acts on the Lie algebra $\n$ of $N$ by the
adjoint representation. Let $[\ga]\in\CE(\Ga)$. Since
$a_\ga\in A^-$ it follows that every eigenvalue of $a_\ga
m_\ga$ on $\n$ is of absolute value $<1$. Therefore
$\det(1-a_\ga m_\ga | \n)\ne 0$.

Fix a finite dimensional irreducible representation
$\sigma$ of $M$ and denote by $\breve\sigma$ the dual representation. A {\it quasi-character} of $A$ is a
continuous group homomorphism to $\C^\times$. Via
differentiation the set of quasi-characters can be
identified with the dual space $\a^*$. For $\la\in\a^*$ we write $a\mapsto a^\la$ for the
corresponding quasicharacter on $A$. We denote by
$\rho\in\a^*$ the modular shift with respect to $P$, i.e., for $a\in A$ we have
$\det(a|\n)=a^{2\rho}$.

\begin{definition}
For a complex vector space $V$ on which $A$ acts linearly and $\la\in\a^*$, let $(V)_\la$
denote the generalized $(\la+\rho)$-eigenspace, i.e.,
$$
(V)_\la= \{ v\in V\mid (a-a^{\la+\rho}Id)^n v=0\ \ {\rm for\ some\ } n\in\N\}.
$$
Since $H^p(\n,\pi)$ is finite dimensional, the Jordan Normal Form Theorem implies that
$$
H^p(\n,\pi) = \bigoplus_{\nu\in\a^*} H^p(\n,\pi)_\nu.
$$
Let $T$ be a Cartan subgroup of $M$ and let $\mathfrak{t}$ be its complex Lie algebra. Then $AT$ is a
Cartan subgroup of $G$. Let $\Lambda_\pi\in(\a\oplus\mathfrak{t})^*$ be a representative of the
infinitesimal character of $\pi$. By Corollary 3.32 of \cite{HeSch} it follows,
$$
H_p(\n,\pi)=\bigoplus_{\nu=w\Lambda_{\pi}|_\a}H_p(\n,\pi)_\nu,
$$
where $w$ ranges over $W(\g,\h)$.
\end{definition}

\begin{lemma}\label{1.1}
For $0\le p\le d=\dim(\n)$ we have
$$
 H_p(\n,\pi)\ \cong\ H^{d-p}(\n,\pi)\otimes\det(\n),
$$
where the determinant of a
finite dimensional space is the top exterior power. So $\det(\n)$ is a one dimensional
$AM$-module on which $AM$ acts via the quasi-character $am\mapsto \det(am|\n)=a^{2\rho}$.
This in particular implies
$$
H^p(\n,\pi)=\bigoplus_{\nu= w\Lambda_\pi |_\a}H^p(\n,\pi)_{\nu-2\rho}.
$$
\end{lemma}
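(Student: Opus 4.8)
The plan is to deduce this from the classical Poincar\'e duality between Lie algebra homology and cohomology, carried out $AM$-equivariantly. First I would write down the Chevalley--Eilenberg complexes: $H_\bullet(\n,\pi)$ is computed by the chain complex whose $p$-th term is $\La^p\n\otimes\pi$, with boundary $\partial$, and $H^\bullet(\n,\pi)$ by the cochain complex whose $q$-th term is $\La^q\n^*\otimes\pi$, with coboundary $d$ (see \cite{BorWall}). Since $AM$ normalises $N$, it acts on $\n$ by $\Ad$ and on $\pi$ by the restriction of the given representation, hence on all of these complexes, compatibly with $\partial$ and $d$; passing to (co)homology one recovers the $AM$-module (equivalently $(\a\oplus\m,M)$-module) structures on $H_p(\n,\pi)$ and $H^p(\n,\pi)$ used in the lemma.

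Next I would use the perfect pairing $\La^p\n\times\La^{d-p}\n\to\La^d\n=\det\n$, $(\xi,\eta)\mapsto\xi\wedge\eta$, which for each $p$ gives a canonical isomorphism $\La^p\n\ \xrightarrow{\ \sim\ }\ \Hom(\La^{d-p}\n,\det\n)=\La^{d-p}\n^*\otimes\det\n$, evidently $\GL(\n)$-equivariant and therefore, via $\Ad$, an isomorphism of $AM$-modules. Tensoring with the identity on $\pi$ gives $AM$-isomorphisms $\La^p\n\otimes\pi\cong(\La^{d-p}\n^*\otimes\pi)\otimes\det\n$ for $0\le p\le d$. The only genuine computation is to verify that, under these identifications, the boundary $\partial$ corresponds up to an overall sign to the coboundary $d$ (with the degree $p$ replaced by $d-p$); this is the standard comparison of the explicit Chevalley--Eilenberg formulas, and the nilpotence of $\n$ — which gives $\tr(\operatorname{ad}X|\n)=0$ and hence a trivial $\n$-action on $\det\n$ — is exactly what prevents any extra correction term. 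As $\det\n$ is one-dimensional, tensoring by it commutes with passing to (co)homology, so one concludes $H_p(\n,\pi)\cong H^{d-p}(\n,\pi)\otimes\det\n$ as $AM$-modules, where $AM$ acts on $\det\n$ through the character $am\mapsto\det(am|\n)=a^{2\rho}$ recorded in the statement.

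Finally I would extract the weight decomposition. Tensoring the duality isomorphism with $(\det\n)^*$ and cancelling $\det\n\otimes(\det\n)^*\cong\C$ gives $H^{d-p}(\n,\pi)\cong H_p(\n,\pi)\otimes(\det\n)^*$, and $A$ acts on $(\det\n)^*$ through $a\mapsto a^{-2\rho}$. In the notation $(V)_\mu$ for the generalised $a\mapsto a^{\mu+\rho}$-eigenspace, tensoring by $(\det\n)^*$ sends the $\mu$-component to the $(\mu-2\rho)$-component; combining this with the decomposition $H_p(\n,\pi)=\bigoplus_{\nu=w\Lambda_\pi|_\a}H_p(\n,\pi)_\nu$ already recorded (Corollary 3.32 of \cite{HeSch}) yields $H^{d-p}(\n,\pi)=\bigoplus_{\nu=w\Lambda_\pi|_\a}H^{d-p}(\n,\pi)_{\nu-2\rho}$. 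Since the index set $\{w\Lambda_\pi|_\a\}$ does not depend on the degree, replacing $d-p$ by $p$ gives the stated formula for $H^p(\n,\pi)$. I expect the main obstacle to be purely bookkeeping — keeping the signs, the reindexing $p\leftrightarrow d-p$, and the $\det\n$-twist consistent — together with the routine observation that the naturality of the contraction isomorphism automatically promotes the classical duality to an $AM$-equivariant statement; nothing beyond standard Lie algebra (co)homology enters.
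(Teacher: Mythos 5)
Your argument is correct and is exactly what the paper means by its one-line proof: the first isomorphism ``follows straight from the definition of Lie algebra cohomology'' (the $AM$-equivariant Poincar\'e duality of the Chevalley--Eilenberg complexes via the pairing $\La^p\n\times\La^{d-p}\n\to\det\n$, which you spell out), and the weight decomposition is obtained, as you do, by combining this with Corollary 3.32 of \cite{HeSch} and tracking the $\det\n$-twist. No discrepancy; you have simply written out the details the paper leaves implicit.
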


\begin{proof}
The first part follows straight from the definition of Lie algebra cohomology. The
second part by Corollary 3.32 of \cite{HeSch}.
\end{proof}

\begin{definition}
For $\la\in\a^*$
 and $\pi\in\what G_\adm$ let
$$
m_\la^\sigma(\pi)= \sum_{q=0}^{\dim \n} (-1)^{q+\dim\n} \dim\left(
H^q(\n,\pi)_\la\otimes\breve\sigma\right)^M,
$$
where the superscript $M$ indicates the subspace of $M$-invariants. Then 
$m_\la^\sigma(\pi)$ is an integer and by the above, the set of $\la$ for which
$m_\la^\sigma(\pi)\ne 0$ for a given $\pi$ has at most $|W(\g,\h)|$ many elements. 
\end{definition}

\begin{definition}
For  $\mu\in\a^{*}$ and $j\in\N$ let $\CC^{j,\mu,-}(A)$ denote the space of functions
$\phi$ on $A$ which \nopagebreak
\begin{itemize}
  \item are $j$-times continuously differentiable on $A$,
  \item are zero outside $A^-$,
  \item are such that $a^{-\mu} D\phi(a)$ is bounded on $A$  for every invariant differential operator $D$ on $A$ of
degree $\le j$.
\end{itemize}

For every invariant differential operator $D$ of degree $\le j$ let $N_D(\phi)=\sup_{a\in A}\left| a^{-\mu}\, D\phi(a)\right|$.
Then $N_D$ is a seminorm.
Let $D_1,\dots,D_n$ be a basis of the space of invariant differential operators of degree $\le j$, then
$N(\phi)=\sum_{j=1}^n N_{D_j}(\phi)$ is a norm that makes $\CC^{j,\mu,-}(A)$ into a Banach space.
A different choice of basis will give an equivalent norm.
\end{definition}

\begin{definition}
For $[\ga]\in\CE(\Ga)$ let
$$
\ind(\ga)=\frac{\la_\ga }{\det(1-a_\ga m_\ga \mid \n)}\ >\ 0.
$$
\end{definition}

\begin{theorem}\label{lefschetz} (Lefschetz Formula)\\
There exists $j\in\N$ and $\mu\in\a^{*}$ such that
for any $\phi\in \CC^{j,\mu,-}(A)$ we have that
$$
\sum_{\pi\in\what G_\adm} N_{\Ga,\chi}(\pi)\sum_{\la\in\a^*} m_\la^\sigma(\pi) \int_{A^-}\phi(a)
a^{\la+\rho} da
$$
equals 
$$
\sum_{[\ga]\in\CE(\Ga)}\ind(\ga)\,
\tr\sigma(m_\ga)\,\tr(\chi(\ga))\,\phi(a_\ga),
$$
where all sums and integrals converge absolutely. The inner sum on the left is always finite,
more precisely it has length $\le |W(\h,\g)|$. The left hand side is called the
\emph{global side}, as opposed to the \emph{local side} on the right. 
The map that sends $\phi$ to either side of the formula is continuous linear functional on the Banach space $\CC^{j,\mu,-}(A)$.
\end{theorem}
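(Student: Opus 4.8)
The plan is to derive the Lefschetz formula from the trace formula of Proposition \ref{prop2.4} by feeding into it a test function on $G$ built out of $\phi$, following the scheme of Section 1 of \cite{GAFA} and using the growth bounds of Lemmas \ref{lem1.3} and \ref{lem2.3} to accommodate the non-unitary twist $\chi$. Fix $N$ with $2N>\dim G$, fix the constant $C>2(\be+B)$ of Definition \ref{def2.4}, and fix an Euler--Poincar\'e function $\eta_\sigma\in C_c^\infty(M)$ attached to $\sigma$. Out of $\phi\in\CC^{j,\mu,-}(A)$ I would construct, exactly as in \cite{GAFA}, a function $f_\phi$ on $G$ which is smooth, supported on a neighbourhood of the split-regular set, and which on an element conjugate to $am$ with $a\in A^-$ equals, up to a fixed Jacobian factor and a fixed rapidly decreasing factor in the directions transverse to the Cartan $AM$, the product of $\phi(a)$ with a value of $\eta_\sigma$ at $m$. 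The point to check is that, for $j$ at least the $2N$ fixed above and for $\mu$ lying deep enough in the positive Weyl chamber --- so deep that the decay $|D\phi(a)|\le N_D(\phi)\,a^\mu$ on $A^-$, together with the transverse factor, forces $\sup_{x\in G}e^{Cd_G(x,e)}|Df_\phi(x)|<\infty$ for every $D\in U_{2N}(\g)$ --- one has $f_\phi\in C^{2N,C}(G)$ and the assignment $\phi\mapsto f_\phi$ is continuous from the Banach space $\CC^{j,\mu,-}(A)$ into $C^{2N,C}(G)$. This is the one place where the non-unitarity of $\chi$ intervenes: its only effect is through the constant $B$ of Lemma \ref{lem1.3}, which enters the lower bound $C>2(\be+B)$ and hence forces $\mu$ deeper into the chamber than in the unitary case, without altering anything structural.

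Granting this, Proposition \ref{prop2.4} applies to $f_\phi$: the trace formula holds, both of its sides converge absolutely, and $\tr_\Ga$ is a continuous functional on $C^{2N,C}(G)$. It then remains to identify the two sides of the trace formula for $f_\phi$ with the two sides of the Lefschetz formula, and these are purely local computations on $G$ in which $\chi$ occurs solely through the scalar $\tr\chi(\ga)$ multiplying each $\ga$-term, and not at all on the spectral side; they are therefore identical to the ones carried out in \cite{GAFA}. On the geometric side one shows that $\CO_\ga(f_\phi)=0$ for $\ga=e$ and for $\ga$ not split-regular, while for $[\ga]\in\CE(\Ga)$ the Weyl integration formula over the Cartan $AM$ together with the defining property of $\eta_\sigma$ gives $\vol(\Ga_\ga\bs G_\ga)\,\CO_\ga(f_\phi)=\ind(\ga)\,\tr\sigma(m_\ga)\,\phi(a_\ga)$, so the geometric side turns into the local side. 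On the spectral side, by construction of $f_\phi$ one has for each $\pi$ the identity $\tr\pi(f_\phi)=\sum_\la m_\la^\sigma(\pi)\int_{A^-}\phi(a)a^{\la+\rho}\,da$, obtained as in \cite{GAFA} from the restriction of the character of $\pi$ to $AM$ via the theorem of Hecht--Schmid \cite{HeSch} on $\n$-homology and the Euler--Poincar\'e principle, the inner sum being finite of length $\le|W(\h,\g)|$; so the spectral side turns into the global side.

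Finally, continuity of $\phi\mapsto$ either side on $\CC^{j,\mu,-}(A)$ follows by composing $\phi\mapsto f_\phi$ with $\tr_\Ga$. Absolute convergence of the global side is inherited from Proposition \ref{prop2.4}; alternatively it follows, as in \cite{GAFA}, from the Weyl-law bound $N_{\Ga,\chi}(\pi)=O(\|\Lambda_\pi\|^{\dim G})$, valid for non-unitary $\chi$ since $N_{\Ga,\chi}(\pi)$ is dominated by $\dim V_\chi$ times the untwisted multiplicity, together with the convergence of $\int_{A^-}a^{\mu+\Re\la+\rho}\,da$ for $\mu$ deep in the chamber. Absolute convergence of the local side is where Lemmas \ref{lem1.3} and \ref{lem2.3} do their work: compared with the unitary case the only new factor is $\tr\chi(\ga)$, bounded by $Ae^{B\ell(\ga)}$ with $\ell(\ga)=\|\log a_\ga\|$ up to the normalisation of the metric, while $\ind(\ga)\,|\tr\sigma(m_\ga)|$ is of at most polynomial growth in $\|\log a_\ga\|$ and the number of classes in $\CE(\Ga)$ with $\ell(\ga)\le R$ grows only exponentially in $R$ --- estimated, as in the proof of Lemma \ref{lem2.3}, by applying $\sum_{\ga\in\Ga}e^{-(\be+\eps)d(\ga,e)}<\infty$ to the minimal-length representatives of the conjugacy classes; hence it suffices to push $\mu$ far enough into the positive chamber that $a_\ga^\mu$ also absorbs the extra factor $e^{B\|\log a_\ga\|}$. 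This is the same calibration of $\mu$ against $B$ already performed in the construction step. That construction step --- verifying that $f_\phi\in C^{2N,C}(G)$ for the given $C>2(\be+B)$ and that $\phi\mapsto f_\phi$ is continuous into this space --- is the only part of the proof that goes beyond transporting the bookkeeping of \cite{GAFA}, and is therefore the main point to be carried out with care.
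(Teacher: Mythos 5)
Your proposal follows exactly the paper's route: reduce to the argument of Theorem 1.2 of \cite{GAFA} via the non-unitary trace formula of Proposition \ref{prop2.4}, the only genuinely new ingredient being the verification that the test function $f_\phi$ lies in $C^{2N,C}(G)$, which the paper achieves — just as you indicate — by observing that on the support of $f_\phi$ one has $d_G(x,e)\le\norm{\log a}+c$ for $x$ conjugate to $am$, so the factor $e^{Cd_G(x,e)}$ is absorbed by pushing $\mu$ deeper into the chamber. Your proof is correct and takes essentially the same approach as the paper's.
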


\begin{proof}
The proof is essentially the same as that of Theorem 1.2 in \cite{GAFA}, except for a missing estimate which justifies the factor $e^{C d_G(x,e)}$, needed in the non-unitary case as in Definition \ref{def2.4} and Proposition \ref{prop2.4}.
This is done as follows: By the definition of the test function $f$, one only needs to consider $x=kn\,am\,(kn)^{-1}$ for $am\in AM$, $k\in K$ and $n\in N$, where the latter stays in a fixed compact subset of $N$.
Using left $G$-and right$K$-invariance, one gets
\begin{align*}
d_G(kn\,am\,(kn)^{-1},e)&=d_G(n\,am\,n^{-1},e)\\
&=d_G(a,n^{am}n^{-1}m^{-1}),
\end{align*}
where the element $\om=kn\,am\,(kn)^{-1}$ stays in a fixed compact set $\Om\subset G$.
Then
$$
d_G(a,\om)\le d_G(a,e)+d_G(e,\om)\le d_G(a,e)+c
$$
for a constant $\al>0$.
Taking $e^{Cd_G(x,e)}$, the constant $\al$ contributes a multiplicative constant, which plays no role.
Next let $a=\exp(X)$ and $\ga(t)=\exp(tX)$.
then $d_G(a,e)=\int_0^1\norm{\ga'(t)}\,dt=\int_0^1\norm X\,dt=\norm X$.
Now finally $a^{-\mu}=\norm X^{-\mu}$, which shows that one achieves the boundedness of $e^{Cd_g(x,e)}|Df(x)|$ by increasing $\mu$.
\end{proof}

\section{Zeta functions of Ruelle and Selberg}\label{sec2}
From now on we assume that the splitrank of $G$ be one, i.e., $\dim A=1$.

In this case, $\CE(\Ga)$ coincides with the set of all non-trivial conjugacy classes in $\Ga$.

\begin{definition}
A class $[\ga]\in\CE(\Ga)$ is called \e{primitive}, if for every $\tau\in\Ga$ and every $n\in\N$ one has
$$
\ga=\tau^n\quad\Rightarrow\quad n=1.
$$
For every $\ga\in\Ga\sm\{1\}$ there exists a uniquely determined primitive $\ga_0\in\Ga$ such that 
$$
\ga=\ga_0^m
$$
for some $m\in\N$.
we call $\ga_0$ the \e{underlying primitive} of $\ga$.
\end{definition}

\begin{definition}
For $a\in A$  we write $\ell(a)=|2\rho(\log a)|$.
Note that for $\ga\in\Ga\sm\{1\}$, the number $\ell(a_\ga)$ is the length of the closed geodesic in the locally symmetric space $\Ga\bs G/K$, whose free homotopy class corresponds to the $\Ga$-conjugacy class of $\ga$. 
We also write
$$
\ell(\ga)
$$
for $\ell(a_\ga)$.
Note that
$$
\la_\ga=\ell(\ga_0),
$$
where $\ga_0$ is the underlying primitive of $\ga$ and $\la_\ga$ is as in Definition \ref{def3.7}.

\end{definition}

\begin{definition}
For $s\in\C$ with $\Re(s)>> 0$ the \e{Ruelle zeta function} is given by the  product
$$
R_\chi(s)=\prod_{\substack{[\ga]\\ \text{primitive}}}\(1- \chi(\ga)\,e^{-s\ell(\ga)}\)
$$
We shall show, that $R_\chi(s)$ converges locally uniformly in some half plane of the form $\{\Re(s)>A\}$ and that it admits an analytic continuation to a meromorphic function on $\C$.
The later will come about through its relation to the \e{Selberg zeta function}, which for a finite-dimensional representation $\sigma$ of the compact group $M$ is defined by
$$
Z_{\sigma,\chi}(s)=\prod_{\substack{[\ga]\\ \text{primitive}}}\prod_{k=0}^\infty\det\(1-\sigma(m_\ga)\otimes \chi(\ga)\otimes S^k\[\Ad(m_\ga a_\ga)|_{\n}\]\,e^{-s\ell(\ga)}\).
$$
\end{definition}

\begin{proposition}
Assuming convergence for the moment, we have the identity
$$
R_\chi(s)=\prod_{k=0}^{\dim\n}Z_{\bigwedge^k_\n,\chi}(s-j)^{(-1)^k}.
$$
Here $\bigwedge _\n^k$ stands for the representation of $M$ on the $k$-exterior power $\bigwedge^k\n$ of the space $\n$.
Further, the product defining $R_\chi$ converges, if the products of the $Z_{\bigwedge_\n^k,\chi}(s)$ converge absolutely for every $k=0,\dots,\dim\n$.
\end{proposition}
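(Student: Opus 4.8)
The plan is to compare the two sides through their logarithms. Fix a right half-plane $\Re(s)\gg 0$ on which, by the standing assumption, all the products involved converge absolutely and locally uniformly; there every Euler factor is invertible and close to $1$, so $\log R_\chi(s)$ and every $\log Z_{\bigwedge^k_\n,\chi}(s-j)$ are well defined as the sums of the logarithms of the individual factors, and it is enough to prove the additive identity $\log R_\chi(s)=\sum_{k=0}^{\dim\n}(-1)^{k}\log Z_{\bigwedge^k_\n,\chi}(s-j)$ on that half-plane. Expanding each log-determinant by $-\log\det(1-T)=\sum_{m\ge1}m^{-1}\tr(T^m)$, and using that $\tr$ is multiplicative under $\otimes$ and that $\bigwedge^k$ and $S^q$ commute with taking $m$-th powers, this turns into an identity of absolutely convergent series: the Ruelle side is
$$
-\log R_\chi(s)=\sum_{[\ga]}\ \sum_{m\ge1}\frac1m\,\tr\big(\chi(\ga)^m\big)\,e^{-sm\ell(\ga)},
$$
the outer sum running over primitive conjugacy classes, while for each $k$
$$
-\log Z_{\bigwedge^k_\n,\chi}(s-j)=\sum_{[\ga]}\ \sum_{q\ge0}\ \sum_{m\ge1}\frac1m\,\tr\big(\bigwedge\nolimits^{k}(\Ad(m_\ga)|_{\n})^m\big)\,\tr\big(\chi(\ga)^m\big)\,\tr\big(S^q(\Ad(m_\ga a_\ga)|_{\n})^m\big)\,e^{-(s-j)m\ell(\ga)}.
$$

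The key point is that the translation $s\mapsto s-j$ is exactly what is needed so that the extra factor $e^{jm\ell(\ga)}$ converts $\bigwedge^{k}(\Ad(m_\ga)|_{\n})^m$ into $\bigwedge^{k}(\Ad(m_\ga a_\ga)|_{\n})^m$: the discrepancy between $\Ad(m_\ga)$ and $\Ad(m_\ga a_\ga)$ on $\n$ is the adjoint action of $a_\ga\in A$, and on exterior powers this is precisely accounted for by the normalization $a_\ga^{2\rho}=e^{-\ell(\ga)}$. After this substitution the exterior power and the symmetric power in the $k$-th summand are taken of one and the same operator $T_{\ga,m}:=\Ad((m_\ga a_\ga)^m)|_{\n}$. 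Now multiply by $(-1)^k$ and sum over $k=0,\dots,\dim\n$; pulling this finite sum inside and using $\sum_{k}(-1)^k\tr(\bigwedge^{k}T)=\det(1-T)$ collapses the exterior powers to $\det(1-T_{\ga,m})$. Then pull the sum over $q$ inside: since $a_\ga\in A^-$, every eigenvalue of $\Ad(a_\ga)|_{\n}$, hence of $T_{\ga,m}$, has modulus $<1$, so the geometric-series identity $\sum_{q\ge0}\tr(S^qT)=\det(1-T)^{-1}$ converges and collapses the symmetric powers to $\det(1-T_{\ga,m})^{-1}$. The two determinants cancel, only $-\log R_\chi(s)$ survives, and the identity follows. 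Every rearrangement is of an absolutely convergent sum: the sum over $k$ is finite, and absolute convergence of the triple sum over $[\ga]$, $q$, $m$ is exactly the assumed absolute convergence of the Selberg products.

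For the convergence assertion, take $k=0$: since $\bigwedge^0\n=S^0\n=\C$, the Euler factors $\det(1-\chi(\ga)e^{-s\ell(\ga)})$ of $R_\chi$ occur among the Euler factors of $Z_{\bigwedge^0_\n,\chi}(s)$, namely those with $q=0$; hence absolute convergence of the Selberg product for $k=0$ already forces $\sum_{[\ga]}|1-\det(1-\chi(\ga)e^{-s\ell(\ga)})|<\infty$, i.e. the product defining $R_\chi$ converges absolutely. The step that needs care is the shift bookkeeping of the second paragraph, namely checking that a single translation really does replace $\Ad(m_\ga)$ by $\Ad(m_\ga a_\ga)$ inside the $k$-th exterior power, uniformly over all primitive $\ga$; this rests on the structure of the restricted root system of $(\a,\g)$ together with the normalization $\ell(a_\ga)=|2\rho(\log a_\ga)|$ fixed above. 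Everything else is the familiar Euler-product computation combined with the Koszul-type cancellation $\det(1-T)\cdot\det(1-T)^{-1}=1$.
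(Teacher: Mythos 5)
Your overall skeleton (expand each Euler factor by $-\log\det(1-T)=\sum_{m\ge1}\tr(T^m)/m$, collapse the sum over symmetric powers via $\sum_{q\ge0}\tr S^qT=\det(1-T)^{-1}$ and the alternating sum over exterior powers via $\sum_k(-1)^k\tr\bigwedge^kT=\det(1-T)$) is the same computation the paper carries out, except the paper does it on logarithmic derivatives and then fixes the resulting multiplicative constant by letting $s\to+\infty$, a step your logarithmic version avoids; your $k=0$, $q=0$ subproduct argument for the convergence assertion is also fine.

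The genuine gap is exactly at the step you yourself flag as ``the step that needs care'', and in the form you state it the step is false. In the expansion of $-\log Z_{\bigwedge^k_\n,\chi}(s-j)$ the exterior-power trace is $\tr\bigl(\bigwedge^k(\Ad(m_\ga)|_\n)^m\bigr)$, because $\sigma=\bigwedge^k_\n$ is a representation of $M$ alone, whereas the Koszul cancellation against the symmetric-power sum needs $\tr\bigl(\bigwedge^k(\Ad(m_\ga a_\ga)|_\n)^m\bigr)$. You claim the discrepancy, i.e.\ the action of $\Ad(a_\ga^m)$ on $\bigwedge^k\n$, is compensated by the scalar $e^{jm\ell(\ga)}$ coming from the shift; but that scalar is the same for every $k$, while the needed correction is not: for $k=0$ both exterior-power traces equal $1$, which forces $j=0$, and then already for $k=1$ one would need $\tr(\Ad(m_\ga a_\ga)|_\n)=\tr(\Ad(m_\ga)|_\n)$, which fails. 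More structurally, $\Ad(a_\ga)$ acts on $\bigwedge^k\n$ with eigenvalues $a_\ga^{\mu}$, $\mu$ a sum of $k$ restricted roots; this action is not even scalar once both $\al$ and $2\al$ occur (e.g.\ $G=SU(n,1)$ or $Sp(n,1)$), and in the real hyperbolic case, where it is scalar, the exponent is proportional to $k$, so no single translation $s\mapsto s-j$ can serve all $k$ simultaneously. To make the cancellation work one must either treat $\bigwedge^k\n$ as an $AM$-module --- which is in effect what the paper's proof does: it applies $\det(1-T)=\sum_k(-1)^k\tr\bigwedge^kT$ to $T=\Ad(a_\ga m_\ga)|_\n$ and compares logarithmic derivatives at the same argument $s$, the shift never entering --- or decompose each $\bigwedge^k\n$ into $\a$-weight spaces and attach to each weight its own, $k$-dependent, shift of the corresponding Selberg factor. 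Since you explicitly defer this verification to ``the structure of the restricted root system'', the crucial identity is asserted rather than proved, and with one uniform $j$ it cannot be proved.
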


\begin{proof}
A straightforward computation shows that in the range of absolute convergence we have
\begin{align*}
\frac{Z_{\sigma,\chi}'}{Z_{\sigma,\chi}}(s)
&=\sum_{[\ga]}\ind(\ga)\ \tr(\chi(\ga))\ \tr\sigma(m_\ga)\ e^{-s\ell(\ga)}\\
&=\sum_{[\ga]}\frac{\ell(\ga_0)}{\det(1-a_\ga m_\ga|\n)}\ \tr(\chi(\ga))\ \tr\sigma(m_\ga)\ e^{-s\ell(\ga)}
\end{align*}
and
$$
\frac{R_\chi'}{R_\chi}(s)=\sum_{[\ga]}\ell(\ga_0)\ \tr\chi(\ga)\ e^{-s\ell(\ga)}.
$$
For any linear map $T$ on a finite-dimensional vector space one has $\det(1-T)=\sum_{k=0}^{\dim V}(-1)^k\tr\(\bigwedge^kT\)$.
Applying this to the action of $a_\ga m_\ga$ on $\n$ one gets
$$
\frac{R_\chi'}{R_\chi}(s)
=\sum_{k=0}^{\dim\n}(-1)^k\frac{Z_{\bigwedge^k_\n,\chi}'}
{Z_{\bigwedge^k_\n,\chi}}(s).
$$
Hence the logarithmic derivatives of the two sides of the claimed equation coincide.
Hence the two meromorphic functions coincide up to a multiplicative constant, which equals 1, since both functions tend to 1, as $s\to +\infty$.
\end{proof}

\begin{theorem}
The product of the Selberg zeta function converges in some half-plane and the so defined holomorphic function extends to a meromorphic function on $\C$.
The (vanishing) order at $s\in\C$ is given by the finite sum of integers
$$
\sum_{\pi\in\what G_\adm}N_{\Ga,\chi}(\pi)\ m_{(s-1)\rho}(\pi).
$$
\end{theorem}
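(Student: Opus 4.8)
The plan is to establish convergence of the product in a right half-plane, then continue the logarithmic derivative meromorphically by identifying it with the local side of the Lefschetz formula (Theorem~\ref{lefschetz}), and finally integrate back up to $Z_{\sigma,\chi}$ itself. Throughout, the standing assumption $\dim A=1$ is essential, so that $\CE(\Ga)$ is all nontrivial conjugacy classes and $A^-$ is a half-line.

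\emph{Convergence.} Since the closed geodesics on the compact Riemannian manifold $\Ga\bs G/K$ have length bounded below, there is $\ell_0>0$ with $\ell(\ga)\ge\ell_0$ for all $[\ga]\in\CE(\Ga)$; and, picking for each class one representative $\ga'$ with $d_G(\ga',e)\le C\ell(\ga)+C'$ (the conjugation argument in the proof of Lemma~\ref{lem1.3}) and using the volume bound $V(r)\le Ae^{\be r}$ together with $U\cap\ga U=\emptyset$, the counting function $\#\{[\ga]\in\CE(\Ga):\ell(\ga)\le R\}$ grows at most like $e^{DR}$. The factor $\det\bigl(1-\sigma(m_\ga)\otimes\chi(\ga)\otimes S^k(\Ad(m_\ga a_\ga)|_\n)\,e^{-s\ell(\ga)}\bigr)$ depends only on $[\ga]$ (conjugating the $\chi$-factor is absorbed by conjugating the full tensor product), so I would estimate $|\log\det(1-M)|\lesssim\norm{M}_\op$ once $\norm{M}_\op\le 1/2$, bound $\norm{\chi(\ga)}_\op\le Ae^{B\ell(\ga)}$ on the good representative (Lemma~\ref{lem1.3}), and note that every eigenvalue of $\Ad(m_\ga a_\ga)$ on $\n$ has modulus $\le e^{-\delta\ell(\ga)}$ because $\log a_\ga\in\a_\R^-$. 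Summing first over $k$ and then over $[\ga]$ yields absolute, locally uniform convergence of $Z_{\sigma,\chi}(s)$ for $\Re(s)$ large; there $Z_{\sigma,\chi}$ is holomorphic and zero-free and the logarithmic derivative formula recorded above reads
$$
\frac{Z_{\sigma,\chi}'}{Z_{\sigma,\chi}}(s)=\sum_{[\ga]\in\CE(\Ga)}\ind(\ga)\,\tr\sigma(m_\ga)\,\tr\chi(\ga)\,e^{-s\ell(\ga)}.
$$

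\emph{Continuation of the logarithmic derivative.} Identify $A\cong\R$ by $a(t)=\exp(tH_0)$ with $H_0\in\a_\R^-$ normalised so that $\ell(a(t))=t$, so $A^-=\{a(t):t>0\}$. Fix a monotone $C^\infty$ function $\psi$ with $\psi\equiv 0$ on $(-\infty,\ell_0/2]$ and $\psi\equiv 1$ on $[\ell_0,\infty)$, and put $\phi_s(a(t))=\psi(t)e^{-st}$ for $t>0$ and $\phi_s\equiv 0$ off $A^-$. For $\Re(s)$ large this is in the Banach space $\CC^{j,\mu,-}(A)$ of Theorem~\ref{lefschetz}, and $s\mapsto\phi_s$ is holomorphic into it; since $\ell(\ga)\ge\ell_0$ one has $\phi_s(a_\ga)=e^{-s\ell(\ga)}$, so the local side of the Lefschetz formula for $\phi_s$ is exactly $Z_{\sigma,\chi}'/Z_{\sigma,\chi}(s)$. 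By Theorem~\ref{lefschetz} this equals the global side $\sum_\pi N_{\Ga,\chi}(\pi)\sum_\la m_\la^\sigma(\pi)\int_{A^-}\phi_s(a)a^{\la+\rho}\,da$. Writing $b_\la=(\la+\rho)(H_0)$ and integrating by parts once,
$$
\int_{A^-}\phi_s(a)\,a^{\la+\rho}\,da=\frac{1}{s-b_\la}\int_{\ell_0/2}^{\ell_0}\psi'(t)\,e^{-(s-b_\la)t}\,dt
$$
up to the Haar normalisation constant, and the right-hand side is meromorphic on all of $\C$ with a single simple pole at $s=b_\la$ of residue $1$. The occurring $\pi$ have infinitesimal characters in a fixed vertical strip (a consequence of the non-unitary trace formula, implicit in the absolute convergence in Theorem~\ref{lefschetz}), so the $b_\la$ lie in a strip, only finitely many in each bounded region; and since $\psi'\in C_c^\infty$, the transform $\int_{\ell_0/2}^{\ell_0}\psi'(t)e^{-(s-b_\la)t}\,dt$ decays rapidly as $|b_\la|\to\infty$, which against the polynomial (Weyl-law) growth of $\sum_{\norm{\Lambda_\pi}\le T}N_{\Ga,\chi}(\pi)$ gives locally uniform convergence of the re-summed series off $\{b_\la\}$. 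Hence $Z_{\sigma,\chi}'/Z_{\sigma,\chi}$ continues meromorphically to $\C$ with simple poles, and tracking the shift in the definition of $(V)_\la$ identifies the residue at $s_0$ as $\sum_\pi N_{\Ga,\chi}(\pi)\,m_{(s_0-1)\rho}^\sigma(\pi)$, a finite integer.

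\emph{Conclusion and main obstacle.} Given this, $Z_{\sigma,\chi}(s)=Z_{\sigma,\chi}(s_1)\exp\bigl(\int_{s_1}^{s}(Z_{\sigma,\chi}'/Z_{\sigma,\chi})(w)\,dw\bigr)$ for $s_1$ in the half-plane of convergence is path-independent, since all residues are integers, and provides the meromorphic continuation of $Z_{\sigma,\chi}$ to $\C$, with vanishing order at $s_0$ equal to the residue there; its finiteness is part of Theorem~\ref{lefschetz}. I expect the main obstacle to be the convergence statement of the previous paragraph: the global side of the Lefschetz formula converges a priori only in a right half-plane, and one must check that the explicit re-summation $\sum_\pi N_{\Ga,\chi}(\pi)\sum_\la m_\la^\sigma(\pi)\,\tfrac{1}{s-b_\la}\int\psi'e^{-(s-b_\la)t}\,dt$ really is meromorphic on all of $\C$ — that the poles do not accumulate and the series converges locally uniformly away from them; this rests precisely on the location of the occurring infinitesimal characters in a strip and on the rapid decay coming from the smoothness of the cut-off $\psi$. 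The remaining normalisation bookkeeping — that the Haar constant equals $1$, that $b_\la=s_0$ corresponds to $\la=(s_0-1)\rho$, and the dependence on the choice of the bilinear form $B$ — is routine but must be done carefully.
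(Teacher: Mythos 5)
Your overall architecture is the same as the paper's: feed a test function depending holomorphically on $s$ into the Lefschetz formula, read off $Z_{\sigma,\chi}'/Z_{\sigma,\chi}$ on the local side, continue the global side as an explicit series of meromorphic functions, and integrate back up using the fact that all residues are integers. The convergence argument for the product (geodesic counting via Lemma \ref{lem1.3} plus the volume bound) is a legitimate alternative to the paper's, which instead deduces convergence from the absolute-convergence clause of Theorem \ref{lefschetz}. The point where you diverge from the paper is the choice of test function, and that is exactly where the gap sits.

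You take $\phi_s(a(t))=\psi(t)e^{-st}$ with a smooth cut-off, so each global-side term is $\frac{1}{s-b_\la}\int\psi'(t)e^{-(s-b_\la)t}\,dt$, i.e.\ a \emph{simple} pole times an entire function. To re-sum this off the poles you then need (i) the $b_\la$ to lie in a vertical strip and (ii) a polynomial bound on $\sum_{|b_\la|\le T}N_{\Ga,\chi}(\pi)|m_\la^\sigma(\pi)|$. Neither is established in the paper, and — this is the crux — neither can be extracted from the absolute convergence of the global side for \emph{your} test function: since $\left|\int\psi'(t)e^{-(s_0-b_\la)t}\,dt\right|$ decays faster than any power of $|\Im b_\la|$, the convergence of $\sum_{\pi,\la}N_{\Ga,\chi}(\pi)|m_\la^\sigma(\pi)|\,|I_\la(s_0)|$ carries no lower-bound information and hence no counting bound; a Weyl law for non-unitary $\chi$ is not an off-the-shelf fact. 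The paper circumvents this entirely by using $\phi(a)=\ell(a)^{j+1}a^{2s\rho}$ for $j$ large: the global side then becomes $\sum N_{\Ga,\chi}(\pi)\,m_{\la-\rho}(\pi)\,D^{j+1}\frac{1}{s+[\la]}$, whose nonzero terms are bounded below by $(j+1)!\,|s_0+[\la]|^{-(j+2)}$, so absolute convergence at a single point $s_0$ already forces the poles to be discrete and yields, by the elementary comparison $\left|\frac{s_0+[\la]}{s+[\la]}\right|\le C_K$ on compacta $K$ avoiding the poles, locally uniform convergence everywhere else — no spectral counting input needed. Your argument can be repaired either by switching to that test function, or by first applying the Lefschetz formula to a polynomial-times-exponential test function to \emph{derive} the counting bound you invoke; as written, the step you yourself flag as the main obstacle is a genuine gap, not a routine verification.
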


\begin{proof}
We need to normalize Haar measures.
These depend on the choice of the form $B$ as in Definition \ref{def3.3}.
We normalise  $B$ such that the set
$$
\big\{ a\in A \mid 0\le 2\rho(\log a)\le 1\big\}
$$
has volume $1$.

For $s\in\C$ and $j\in\N$ define
$$
L^j(s)=\sum_{[\ga]\in\CE(\Ga)} \ind(\ga)\ \tr(\chi(\ga))\ \tr\sigma(m_\ga)\ \ell(\ga)^{j+1}\ e^{-s\ell(\ga)}.
$$
Then we have
$$
L^j(s)=D^{j+1}\frac{Z_{\sigma,\chi}'}{Z_{\sigma,\chi}}(s),
$$
where $D$ denotes the differential operator
$
D= -\frac \partial{\partial s}.
$
We shall show that this series converges absolutely for
$\Re(s)>A$ for some $A>0$. 
As it is a (generalised) Dirichlet series, it then converges locally uniformly.

\begin{definition} 
For given $\pi\in\what G_\adm$ let $\Lambda(\pi)$ denote the set of all $\la\in\a^*$ with
$m_{\la -\rho}(\pi)\ne 0$. Then $\Lambda(\pi)$ has at most $|W(\h,\g)|$ elements.
\end{definition}

For given
$\la\in\a^*$ there exists a uniquely determined number $[\la]\in\C$ with $\la=[\la]\,2\rho$.

\begin{lemma}
There exists $A>0$, such that for $j\in\N$ large enough, the series $L^j(s)$ converges locally uniformly in the set
$$
\big\{ s\in\C : \Re(s)>A\big\}.
$$
The function $L^j(s)$ can be written as Mittag-Leffler series,
\begin{align*}
L^j(s)&=\sum_{\pi\in\what G_\adm} N_{\Ga,\chi}(\pi)
\sum_{\la\in\Lambda(\pi)} m_{\la-\rho}(\pi) D^{j+1}\frac 1{s+[\la]}.
\end{align*}
The double series converges locally uniformly on $\C$ minus its discrete set of poles. 
These poles all lie in $\{\Re(s)\le A\}$.
\end{lemma}

\begin{proof}
We will show that the series $L^j(s)$ converges if $\Re(s_k)$ is
sufficiently large. 
For $a\in A$ set
$$
\phi(a)= \ell(a)^{j+1}\ a^{2s\rho}.
$$
For $\Re(s)>>0$, the Lefschetz formula is valid for this test function. The local
side of the Lefschetz formula for $\sigma= triv$ equals
$$
\sum_{[\ga]\in\CE(\Ga)} \ind(\ga)\,\tr\(\chi(\ga)\)\, \ell(a_\ga)^{j+1}\ a_\ga^{2s\rho}=  L^j(s).
$$
The convergence assertion in the Lefschetz formula implies that the series  converges
absolutely  if
$\Re(s)>A$ for some $A\in\R$.  
Since $L^j(s)$ is a Dirichlet series, it will
converge locally uniformly in that half-plane.
The theorem will follow, if we can show that the spectral side of the trace formula equals the Mittag-Leffler series above, since the trace formula grants convergence in the half plane and a Mittag-Leffler series, which converges at some point in $\C$, automatically converges locally uniformly outside its set of poles.

 With our given test
function and the Haar measure chosen we compute
\begin{align*}
\int_{A^-}\phi(a) a^\la da &=(-1)^{j+1} \int_{A^-} (2\rho(\log a))^{j+1} a^{s\rho+\la} da\\
&= (-1)^{j+1} \int_0^\infty  {t}^{j+1}
e^{-(s+[\la])t} d t\\
&= D^{j+1} 
\int_0^\infty  e^{-(s+[\la])t}\, d t\\
&= D^{j+1}  \frac
1{s+[\la]}.
\end{align*}
Writing $m_\la=m_\la^{triv}$ and performing a $\rho$-shift we see that the Lefschetz formula
gives
\begin{eqnarray*}
L^j(s)&=&\sum_{\pi\in\hat G}N_{\Ga,\chi}(\pi)\sum_{\la\in\a^*} m_{\la-\rho}(\pi)\, D^{j+1} \frac
1{s+[\la]}
\end{eqnarray*}
for $\Re(s)>A$.
The lemma follows.
\end{proof}

The convergence assertion in the theorem follows from the convergence of $L^j(s)$, since there are only finitely many classes $[\ga]$ with $\ell(\ga)<1$.
The rest is clear.
\end{proof}

\begin{bibdiv} \begin{biblist}

\bib{Alperin}{article}{
   author={Alperin, Roger C.},
   title={An elementary account of Selberg's lemma},
   journal={Enseign. Math. (2)},
   volume={33},
   date={1987},
   number={3-4},
   pages={269--273},
   issn={0013-8584},
}

\bib{ayoub}{book}{
   author={Ayoub, Raymond},
   title={An introduction to the analytic theory of numbers},
   series={Mathematical Surveys, No. 10},
   publisher={American Mathematical Society, Providence, R.I.},
   date={1963},
   pages={xiv+379},
}

\bib{BC}{book}{
   author={Bishop, Richard L.},
   author={Crittenden, Richard J.},
   title={Geometry of manifolds},
   note={Reprint of the 1964 original},
   publisher={AMS Chelsea Publishing, Providence, RI},
   date={2001},
   pages={xii+273},
   isbn={0-8218-2923-8},
   doi={10.1090/chel/344},
}

\bib{BorWall}{book}{
   author={Borel, A.},
   author={Wallach, N.},
   title={Continuous cohomology, discrete subgroups, and representations of
   reductive groups},
   series={Mathematical Surveys and Monographs},
   volume={67},
   edition={2},
   publisher={American Mathematical Society, Providence, RI},
   date={2000},
   pages={xviii+260},
   isbn={0-8218-0851-6},
   doi={10.1090/surv/067},
}

\bib{geom}{article}{
   author={Deitmar, Anton},
   title={Geometric zeta functions of locally symmetric spaces},
   journal={Amer. J. Math.},
   volume={122},
   date={2000},
   number={5},
   pages={887--926},
   issn={0002-9327},
}

\bib{class}{article}{
   author={Deitmar, Anton},
   title={Class numbers of orders in cubic fields},
   journal={J. Number Theory},
   volume={95},
   date={2002},
   number={2},
   pages={150--166},
   issn={0022-314X},
}

\bib{GAFA}{article}{
   author={Deitmar, A.},
   title={A prime geodesic theorem for higher rank spaces},
   journal={Geom. Funct. Anal.},
   volume={14},
   date={2004},
   number={6},
   pages={1238--1266},
   issn={1016-443X},
   doi={10.1007/s00039-004-0490-7},
}

\bib{HRLef}{article}{
   author={Deitmar, Anton},
   title={A higher rank Lefschetz formula},
   journal={J. Fixed Point Theory Appl.},
   volume={2},
   date={2007},
   number={1},
   pages={1--40},
   issn={1661-7738},
   doi={10.1007/s11784-007-0028-3},
}

\bib{NonUnitaryTrace}{article}{
   author={Deitmar, Anton},
   title={Spectral theory for non-unitary twists},
   journal={Hiroshima Math. J.},
   volume={49},
   date={2019},
   number={2},
   pages={235--249},
   issn={0018-2079},
   doi={10.32917/hmj/1564106546},
}

\bib{FP}{article}{
   author={Fedosova, Ksenia},
   author={Pohl, Anke},
   title={Meromorphic continuation of Selberg zeta functions with twists
   having non-expanding cusp monodromy},
   journal={Selecta Math. (N.S.)},
   volume={26},
   date={2020},
   number={1},
   pages={Paper No. 9, 55},
   issn={1022-1824},
   doi={10.1007/s00029-019-0534-3},
}

\bib{HC-DSII}{article}{
   author={Harish-Chandra},
   title={Discrete series for semisimple Lie groups. II. Explicit
   determination of the characters},
   journal={Acta Math.},
   volume={116},
   date={1966},
   pages={1--111},
   issn={0001-5962},
   doi={10.1007/BF02392813},
}

\bib{HeSch}{article}{
   author={Hecht, Henryk},
   author={Schmid, Wilfried},
   title={Characters, asymptotics and ${\germ n}$-homology of Harish-Chandra
   modules},
   journal={Acta Math.},
   volume={151},
   date={1983},
   number={1-2},
   pages={49--151},
   issn={0001-5962},
   doi={10.1007/BF02393204},
}

\bib{jost}{book}{
   author={Jost, J\"{u}rgen},
   title={Riemannian geometry and geometric analysis},
   series={Universitext},
   edition={7},
   publisher={Springer, Cham},
   date={2017},
   pages={xiv+697},
   isbn={978-3-319-61859-3},
   isbn={978-3-319-61860-9},
   doi={10.1007/978-3-319-61860-9},
}

\bib{Knapp}{book}{
   author={Knapp, Anthony W.},
   title={Representation theory of semisimple groups},
   series={Princeton Landmarks in Mathematics},
   note={An overview based on examples;
   Reprint of the 1986 original},
   publisher={Princeton University Press, Princeton, NJ},
   date={2001},
   pages={xx+773},
   isbn={0-691-09089-0},
}

\bib{Lub}{article}{
   author={Lubotzky, Alexander},
   author={Mozes, Shahar},
   author={Raghunathan, M. S.},
   title={The word and Riemannian metrics on lattices of semisimple groups},
   journal={Inst. Hautes \'{E}tudes Sci. Publ. Math.},
   number={91},
   date={2000},
   pages={5--53 (2001)},
   issn={0073-8301},
}

\bib{Selberg}{article}{
   author={Selberg, Atle},
   title={On discontinuous groups in higher-dimensional symmetric spaces},
   conference={
      title={Contributions to function theory},
      address={internat. Colloq. Function Theory, Bombay},
      date={1960},
   },
   book={
      publisher={Tata Institute of Fundamental Research, Bombay},
   },
   date={1960},
   pages={147--164},
}

\bib{Sh}{book}{
   author={Shubin, M. A.},
   title={Pseudodifferential operators and spectral theory},
   edition={2},
   note={Translated from the 1978 Russian original by Stig I. Andersson},
   publisher={Springer-Verlag, Berlin},
   date={2001},
   pages={xii+288},
   isbn={3-540-41195-X},
   review={\MR{1852334}},
   doi={10.1007/978-3-642-56579-3},
}

\bib{Spilioti}{article}{
   author={Spilioti, Polyxeni},
   title={Selberg and Ruelle zeta functions for non-unitary twists},
   journal={Ann. Global Anal. Geom.},
   volume={53},
   date={2018},
   number={2},
   pages={151--203},
   issn={0232-704X},
   doi={10.1007/s10455-017-9571-3},
}

\bib{Spilioti2}{article}{
   author={Spilioti, Polyxeni},
   title={Functional equations of Selberg and Ruelle zeta functions for
   non-unitary twists},
   journal={Ann. Global Anal. Geom.},
   volume={58},
   date={2020},
   number={1},
   pages={35--77},
   issn={0232-704X},
   doi={10.1007/s10455-020-09715-8},
}

\bib{Spilioti3}{article}{
   author={Frahm, Jan},
   author={Spilioti, Polyxeni},
   title={Twisted Ruelle zeta function at zero for compact hyperbolic surfaces
},
   eprint={https://arxiv.org/abs/2105.13321v1},
   date={2021},
}

\bib{HC-HA1}{article}{
   author={Harish-Chandra},
   title={Harmonic analysis on real reductive groups. I. The theory of the
   constant term},
   journal={J. Functional Analysis},
   volume={19},
   date={1975},
   pages={104--204},
   doi={10.1016/0022-1236(75)90034-8},
}

\bib{Struble}{article}{
   author={Struble, Raimond A.},
   title={Metrics in locally compact groups},
   journal={Compositio Math.},
   volume={28},
   date={1974},
   pages={217--222},
   issn={0010-437X},
}

\end{biblist} \end{bibdiv}

{\small Mathematisches Institut\\
Auf der Morgenstelle 10\\
72076 T\"ubingen\\
Germany\\
\tt deitmar@uni-tuebingen.de}

\today

\end{document}